\setlist[enumerate,1]{label=(\roman*), ref=(\roman*)}
\theoremstyle{plain}
	\newtheorem{theorem}{Theorem}[section]		
	\newtheorem{corollary}[theorem]{Corollary}
	\newtheorem{lemma}[theorem]{Lemma}
	\newtheorem{prop}[theorem]{Proposition}
\theoremstyle{definition}
	\newtheorem{definition}[theorem]{Definition}
\theoremstyle{remark}
	\newtheorem{remark}[theorem]{Remark}
\numberwithin{equation}{section}  
\DeclareMathOperator{\E}{{\mathds E}}
\DeclareMathOperator{\diag}{{diag}}
\DeclareMathOperator{\proj}{{proj}}
\newcommand{\one}{{\mathds 1}} 		
\newcommand{\nd}{\boldsymbol d}
\newcommand{\nde}{\boldsymbol{ de}}	
\title{\textbf{Nested Sinkhorn Divergence To Compute\\ The Nested Distance}}
\author{
	Alois Pichler\thanks{ University of Technology, Chemnitz, Faculty of Mathematics. 90126 Chemnitz, Germany}\,
	\thanks{DFG, German Research Foundation~-- Project-ID 416228727 -- SFB~1410. \protect\hfill\protect\\
		\includegraphics[width=0.8em]{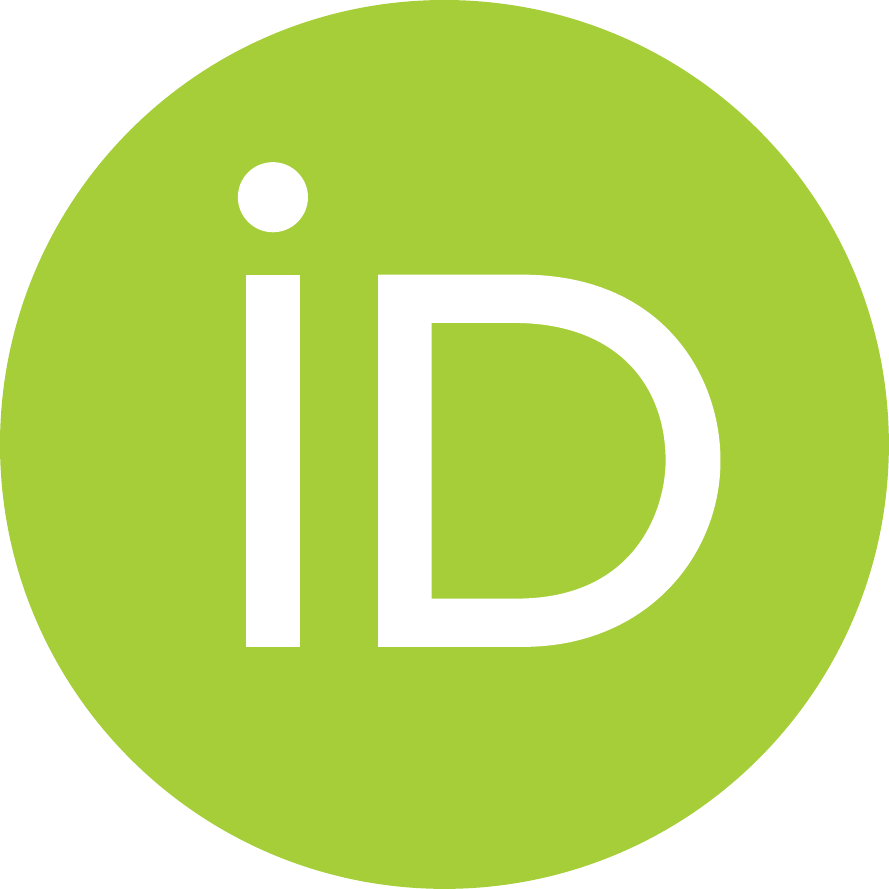}~\protect\href{https://orcid.org/0000-0001-8876-2429}{orcid.org/0000-0001-8876-2429}.
	Contact: \protect\href{mailto:alois.pichler@math.tu-chemnitz.de}{alois.pichler@math.tu-chemnitz.de}}  
\and Michael Weinhardt\footnotemark[1]}
\begin{document}
	\maketitle
	\begin{abstract}
		The nested distance builds on the Wasserstein distance to quantify the difference of stochastic processes, including also the information modelled by filtrations.
		The Sinkhorn divergence is a relaxation of the Wasserstein distance, which can be computed considerably faster. For this reason we employ the Sinkhorn divergence and take advantage of the related (fixed point) iteration algorithm. Furthermore, we investigate the transition of the entropy throughout the stages of the stochastic process and provide an entropy-regularized nested distance formulation, including a characterization of its dual.
		Numerical experiments affirm the computational advantage and supremacy.
		
		\medskip
		\noindent \textbf{Keywords:} Nested distance {\tiny •} optimal transport {\tiny •} Sinkhorn divergence {\tiny •} entropy

		\noindent \textbf{Classification:} 90C08, 90C15, 60G07 
	\end{abstract}

\section{Introduction}

The Wasserstein distance, also known as Monge--Kantorovich distance, is used in optimal transport theory to describe and characterize optimal transitions between probability measures. They are characterized by the lowest (or cheapest) average costs to fully transfer a probability measure into another. The costs are most typically proportional to the distance of locations to be connected. \citet{RachevRueschendorf} provide a comprehensive discussion of the Wasserstein distance and \citet{Villani2009} summarizes the optimal transport theory. 

The nested distance is based on the Wasserstein distance. It has been introduced by \citet{Pflug2009} and generalizes and extends the theory from probability measures to stochastic processes, cf.\ \citet{PflugPichler2011}. 

The nested distance is employed in multistage stochastic programming to describe the quality of an approximation. Multistage stochastic programming has applications in many sectors, e.g., the financial sector (\citet{Edir05}, \citet{Brodt1983}), in management science or in energy economics (\citet{AnaluiPflug}, \citet{Welington2017, Carpentier, Carpentier2015}).
The prices, demands, etc., are often modeled as a stochastic process $\xi=(\xi_0,\dots,\xi_T)$ and the optimal values are rarely obtained analytically.
For the numerical approach the stochastic process is replaced by a finite valued stochastic scenario process $\tilde\xi=(\tilde\xi_0,\dots,\tilde\xi_T)$, which is a finite tree. Naturally, the approximation error should be minimized without unnecessarily increasing the complexity of the computational effort. \citet{KiruiPichler} provide a Julia package for generating scenario trees and scenario lattices for multistage stochastic programming. \citet{MaggioniPflug} provide guaranteed bounds and \citet{KopaVitali2020} investigate corresponding reduction techniques.

This paper addresses the \emph{Sinkhorn divergence} in place of the Wasserstein distance. This pseudo-distance is also called \emph{Sinkhorn distance} or \emph{Sinkhorn loss}. In contrast to the exact implementation \citet{Bertsekas1989}, e.g., Sinkhorn divergence corresponds to a regularization of the Wasserstein distance, which is strictly convex and which allows to improve the efficiency of the computation by applying Sinkhorn's (fixed-point) iteration procedure. The relaxation itself is similar to the modified objective of interior-point methods in numerical optimization. 
A cornerstone is the theorem by \citet{Sinkhorn1967a} that shows a unique decomposition for non-negative matrices and ensures convergence of the associated iterative scheme. \citet{Cuturi2013} has shown the potential of the Sinkhorn divergence and made it known to a wider audience. Nowadays, Sinkhorn divergence is used in statistical applications, cf.\ \citet{BigotCazelles2019} and \citet{Luise2018Differential}, for image recognition and machine learning, cf.\ \citet{Kolouri2017} and \citet{PeyreAude2018}, among many other applications.

Extending Sinkhorn's algorithm to multistage stochastic programming has been proposed recently in \citet[Section~5.2.3, pp.~97--99]{Tran2020}, where a numerical example indicating computational advantages is also given. This paper resumes this idea and assesses the entropy relaxed nested distance from theoretical perspective. We address its approximating properties and derive its convex conjugate, the dual.
As well, numerical tests included confirm the computational advantage regarding the simplicity of the implementation as well as significant gains in speed.

\paragraph{Outline of the paper.}
	The following Section~\ref{sec:Preliminaries} introduces the notation and provides the definitions to discuss the nested distance. Additionally, the importance of the filtration and the complexity of the computation is shown. Section~\ref{sec:SinkhornDistance} introduces the Sinkhorn divergence and derive its dual. In Section~\ref{sec:EntropyRegularizedNestedDistance} we regularize the nested distance and show the equality between two different approaches. Results and comparisons are visualized and discussed in Section~\ref{sec:Experiments}. Section~\ref{sec:Summary} summarizes and concludes the paper.

\section{Preliminaries}\label{sec:Preliminaries}
	This section recalls the definition of the nested distance and provides an example to illustrate the importance of the filtration. Throughout, we shall work on a probability space~\((\Xi, \mathcal F, P)\).

\subsection{Wasserstein distance}
The Wasserstein distance is a distance for probability measures. It is the building block for the process distance and its regularized version, which we address here, the Sinkhorn divergence. The Sinkhorn divergence is not a distance in itself. To point out the differences we highlight the defining elements.
\begin{definition}[Distance of measures]\label{def:Distance}
	Let~$\mathcal P$ be a set of probability measures on $\Xi$. A function
	$d\colon\mathcal P\times\mathcal P\to[0,\infty)$ 
	is called \emph{distance}, if it satisfies the following conditions: 
	\begin{enumerate}
		\item\label{enu:Nonnegativity} Nonnegativity: for all $P_1$, $P_2\in\mathcal P$,
		\[	d(P_1,P_2)\geq 0;	\]
		\item Symmetry: for all $P_1$, $P_2\in\mathcal P$, 
		\[	d(P_1,P_2)= d(P_2,P_1);\]
		\item\label{enu:Triangle} Triangle Inequality: for all $P_1$, $P_2$ and $P_{3}\in\mathcal P$,
		\[	d(P_1,P_2)\leq d(P_1,P_3)+d(P_3,P_2);\]
		\item\label{enu:Strict} Strictness: if $d(P_1,P_2)=0$, then $P_1=P_2$. 
	\end{enumerate}
\end{definition}
\citet{Rachev} presents a huge variety of probability metrics. 
Here, we focus on the Wasserstein distance, which allows a generalization for stochastic processes. For this we assume that the sample space~\(\Xi\) is equipped with a metric~\(d\) so that~\((\Xi,d)\) is Polish.
\begin{definition}[Wasserstein distance]
	Let~$P$ and~$\tilde P$ be two probability measure on $\Xi$ endowed with a distance $d\colon\Xi\times\Xi\to\mathbb R$. The \emph{Wasserstein distance} of order $r\geq1$ is
	\[
		d^r(P,\tilde P)\coloneqq\inf_\pi \iint_{\Xi\times\Xi}d(\xi,\tilde\xi)^r\,\pi(\mathrm d\xi,\mathrm d\tilde\xi),
	\]
	where the infimum is over all probability measures $\pi$ on $\Xi\times\Xi$ 	with marginals $P$ and $\tilde P$, respectively. 
\end{definition}

\begin{remark}[Distance versus cost functions]
	The definition of the Wasserstein distance presented here starts with a distance~$d$ on $\Xi$ and the Wasserstein distance is a distance on $\mathcal P$ in the sense of Definition~\ref{def:Distance} above. 
	However, in what follows \emph{any} cost function $c\colon \Xi\times\Xi\to\mathbb R$ could be considered instead of the distance~$d$ on $\Xi$ (of course,~$c$ has to be measurable and the integral has to exist). The result might not be a distance in the sense of Definition~\ref{def:Distance}. In what follows we will point to the differences.
\end{remark}

In a discrete framework, probability measures are of the form \(P=\sum\nolimits _{i=1}^np_i\,\delta_{\xi_i}\) with $p_i\geq0$ and $\sum\nolimits _{i=1}^np_i=1$ and the support $\{\xi_i\colon i= 1,2,\dots,n\}\subset\Xi$ is finite. 
The Wasserstein distance~$d^r$ of two discrete measures $P=\sum_{i=1}^np_i\,\delta_{\xi_i}$ and $\tilde P= \sum_{j=1}^{\tilde n}\tilde p_j\,\delta_{\tilde\xi_j}$ is the $r$-th root of the optimal value of 
\begin{align}\label{eq:Wasserstein}
	\text{minimize }_{\text{in }\pi\ } & \sum_{i=1}^n\sum_{j=1}^{\tilde n} \pi_{ij}\,d_{ij}^r\\
	\text{subject to } & \sum_{j=1}^{\tilde n}\pi_{ij}=p_i,&& i=1,\dots,n, \\
	& \sum_{i=1}^n\pi_{ij}=\tilde p_j, && j=1,\dots\tilde n \text{ and}\\
	&\pi_{ij}\ge 0,
\end{align}
where $d_{ij}\coloneqq d(\xi_i,\tilde\xi_j)$ is an $n\times\tilde n$-matrix collecting all distances. The optimal measure in~\eqref{eq:Wasserstein} is denoted~$\pi^W$ and called an optimal transport plan.
The convex, linear dual of~\eqref{eq:Wasserstein} is 
\begin{subequations}
	\begin{align}\label{eq:WassersteinDual}
		\text{maximize}_{\text{ in }\lambda\text{ and } \mu} & \sum_{i=1}^n p_i\,\lambda_i+\sum_{j=1}^{\tilde n} \tilde p_j\,\mu_j\\ \label{eq:WassersteinDualb}
		\text{subject to }& \lambda_i+\mu_j\le d_{ij}^r  \ \text{ for all }i=1,\dots n \text{ and }  j=1,\dots\tilde n.
	\end{align}
\end{subequations}

\begin{remark}
	The problem~\eqref{eq:Wasserstein} can be written as linear optimization problem
	\begin{align*}
		\text{minimize}_{\text{ in }x\ } & c^{\top}x\\
		\text{subject to } & Ax=b,\\
		& x\geq0,
	\end{align*}
	where $x=(\pi_{11},\pi_{21},\dots,\pi_{n\tilde n})^\top$, $c=(d_{11}, d_{21},\dots,d_{n\tilde n})^\top$, $b=(p_1,\dots,p_n,\tilde p_1,\dots,\tilde p_{\tilde n})^\top$
	and~$A$ is the matrix
	\[	A=\begin{pmatrix}\one_{\tilde n}\otimes I_n\\
		I_{\tilde n}\otimes\one_n
		\end{pmatrix}\]
		with $\one=(1,\dots,1)$.
	\end{remark}

\subsection{The distance of stochastic processes}
Let $(\Xi,\mathcal F,P)$ and $(\tilde\Xi,\tilde{\mathcal F},\tilde P)$
be two probability spaces. We now consider two stochastic processes with realizations $\xi$, $\tilde\xi\in \Xi$ and $\Xi\coloneqq \Xi_0 \times \Xi_1\times\dots\times \Xi_T$.  There are many metrics $d$ such that $(\Xi,d)$
is a metric space. Without loss of generality we may set $\Xi_t=\mathbb R$ for all $t\in\{0,1,\dots,T\}$ and employ the $\ell^1$-distance, i.e., $d(\xi,\tilde\xi)= \sum_{t=0}^T|\xi_t-\tilde\xi_t|$.

\begin{remark}
	The example depicted in Figure~\ref{fig:example} illustrates that a simple application of the Wasserstein distance does not capture the different information (knowledge) available at the intermediate stage.  Indeed, let $\epsilon>0$. 
	The distance matrix of the trajectories is 
	\[	d=\begin{pmatrix}\epsilon & 2+\epsilon\\	2 & 0
		\end{pmatrix}
	\]
	and the optimal transport plan is 
	\[ 	\pi=\frac12\begin{pmatrix}1 & 0\\
		0 & 1 \end{pmatrix}.
	\]
	It follows the Wasserstein distance according~\eqref{eq:Wasserstein} is $d=\sum_{i,j}d_{ij}\,\pi_{ij}=\nicefrac\epsilon2$.
	\begin{figure}
		\centering \pagestyle{empty} 
		\tikzstyle{level 1}=[level distance=2.5cm, sibling distance=2cm]
		\tikzstyle{level 2}=[level distance=2.5cm, sibling distance=1cm]
		\tikzstyle{bag} = [text width=4em, text centered] 
		\tikzstyle{end} = [circle, minimum width=0pt,fill, inner sep=0pt]

		\begin{minipage}[b]{0.5\linewidth}
		\begin{tikzpicture}[grow=right]
			\node[bag] {2}     child {         node[bag] {2}                     child {                 node[end, label=right:                     {1}] {}                 edge from parent                 node[below]  {$1$}             }             edge from parent              node[below]  {$\frac1{2}$}     }     child {         node[bag] {$2+\epsilon$}                     child {                 node[end, label=right:                     {3}] {}                 edge from parent                 node[above]  {$1$}             }         edge from parent                      node[above]  {$\frac1{2}$}     };
		\end{tikzpicture}
		\end{minipage}
		\begin{minipage}[b]{0.5\linewidth}
		\centering
		\pagestyle{empty} 
		\tikzstyle{level 1}=[level distance=2.5cm, sibling distance=2cm]
		\tikzstyle{level 2}=[level distance=2.5cm, sibling distance=2cm]
		\tikzstyle{bag} = [text width=4em, text centered]
		\tikzstyle{end} = [circle, minimum width=0pt,fill, inner sep=0pt]
		\begin{tikzpicture}[grow=right] 
		\node[bag] {2}     child {         node[bag] {2}                 child {                 node[end, label=right:                     {1}] {}                 edge from parent                 node[below]  {$\frac1{2}$}             }             child {                 node[end, label=right:                     {3}] {}                 edge from parent                 node[above]  {$\frac1{2}$}             }         edge from parent                      node[below]  {$1$}     }; 
		\end{tikzpicture}
		\end{minipage}

		\caption{\label{fig:example}Two processes illustrating two different flows
		of information, cf.\ \citet{Heitsch2006}, \citet{KovacevicPichler}}
	\end{figure}
\end{remark}

We conclude from the preceding remark that the Wasserstein distance is not suitable to distinguish stochastic processes with different flows of information. The reason is that this approach does not involve conditional probabilities at stages $t=0,1,\dots,T-1$, but only probabilities at the final stage $t=T$, where all the information from intermediate stages are ignored.
The information at the previous stage is encoded by the $\sigma$\nobreakdash-algebra 
\[\mathcal F_t=\sigma\big(A_1\times\dots\times A_t\times \Xi_{t+1}\times\dots\times \Xi_T\colon A_{t^\prime}\subset\Xi_{t^\prime} \text{ measurable}\big)\]
for $t=0,1,\dots,T$ ($\tilde{\mathcal F}_t$, resp.). 
The following generalization of the Wasserstein distance takes all conditional probabilities into account. 
\begin{definition}[The nested distance]\label{ndDefinition}
	The \emph{nested distance} of order $r\geq1$
	of two filtered probability spaces $\mathbb P=(\Xi,(\mathcal F_t),P)$
	and $\tilde{\mathbb P}=(\tilde\Xi, (\tilde{\mathcal F}_t), \tilde P)$,
	for which a distance $d\colon\Xi\times\tilde\Xi\to\mathbb R$
	is defined, is the optimal value of the optimization problem 
	\begin{align}\label{eq:NestedDistance}
		\text{minimize}_{\text{ in }\pi\ } & \left(\iint_{\Xi\times\tilde\Xi}
		d(\xi,\tilde\xi)^r\, \pi(\mathrm d\xi, \mathrm d\tilde\xi)\right)^{\nicefrac1r}\\
		\text{subject to } & \pi(A\times\tilde\Xi\mid \mathcal F_t\otimes\tilde{\mathcal F}_t)=P(A\mid \mathcal F_t), &  & A\in\mathcal F_t,\ t=1,\dots,T,\\
		& \pi(\Xi\times B\mid \mathcal F_t\otimes\tilde{\mathcal F}_t)=\tilde P(B\mid \tilde{\mathcal F}_t), &  & B\in\tilde{\mathcal F}_t,\ t=1,\dots,T,
	\end{align}
	where the infimum in~\eqref{eq:NestedDistance} is among all bivariate
	probability measures $\pi\in\mathcal P(\Xi\times\tilde\Xi)$
	defined on $\mathcal F_T\otimes\tilde{\mathcal F}_T$. The optimal value of~\eqref{eq:Wasserstein}, the nested distance of order~$r$, is denoted by~$\nd^r(\mathbb P,\tilde{\mathbb P})$. 
\end{definition}

For the discrete nested distance we use trees to model the whole space
and filtration. We denote by $\mathcal N_t$ ($\tilde{\mathcal N}_t$,
resp.)\ the set of all nodes at the stage $t$. Furthermore, a predecessor~$m$ of the node~$i$, not necessarily the immediate predecessor, is indicated by~$m\prec i$. 
The nested distance for trees is the $r$-th root of the optimal value of 
\begin{align}\label{eq:ndTree}
	\text{minimize }_{\text{in } \pi\ } & \sum_{i,j}\pi_{ij}\cdot d_{ij}^r\\
	\text{subject to } & \sum_{j\succ j_t}\pi(i,j\mid i_t,j_t)=P(i\mid i_t), && i_t\prec i,j_t,\\
	& \sum_{i\succ i_t}\pi(i,j\mid i_t,j_t)=\tilde P(j\mid j_t), && j_t\prec j,i_t,\\
	& \pi_{ij}\geq0 \text{ and } \sum_{i,j}\pi_{ij}=1,
\end{align}
where $i\in\mathcal N_T$ and $j\in\tilde{\mathcal N}_T$
are the leaf nodes and $i_t\in\mathcal N_t$ as well as $j_t\in\tilde{\mathcal N}_t$ are nodes on the same stage $t$. As usual for discrete measures, the conditional probabilities $\pi(i,j\mid i_t,j_t)$ are given by 
\begin{equation}\label{eq:piCond}
	\pi(i,j\mid i_t,j_t)\coloneqq\frac{\pi_{ij}}{\sum_{i'\succ i_t,j'\succ j_t}\pi_{i^\prime j^\prime}}.
\end{equation}

\begin{remark}
	Employing the definition~\eqref{eq:piCond} for $\pi(i,j\mid i_t,j_t)$ reveals that the problem~\eqref{eq:ndTree} is indeed a \emph{linear} program in~$\pi$ (cf.~\eqref{eq:Wasserstein}).
\end{remark}

\subsection{Rapid, nested computation of the process distance}

This subsection addresses an advanced approach for solving the linear
program~\eqref{eq:ndTree}. We first recall the tower property, which allows an important simplification of the constraints in~\eqref{eq:NestedDistance}.
\begin{lemma}\label{TowerProperty} 
	To compute the nested distance it is enough to condition on the immediately following $\sigma$\nobreakdash-algebra: the conditions
	\[	\pi\big(A\times\Xi\mid \mathcal F_t\otimes\tilde{\mathcal F}_t\big)\ 
		\text{ for all }\ A\in\mathcal F_T\]
	in~\eqref{eq:NestedDistance} may be replaced by 
	\[	\pi\big(A\times\Xi\mid\mathcal F_t\otimes\tilde{\mathcal F}_t\big)\ 
	\text{ for all }\ A\in\mathcal F_{t+1}.\]
 \end{lemma}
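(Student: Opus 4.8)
The plan is to show that the two families of constraints in~\eqref{eq:NestedDistance} carve out the \emph{same} set of bivariate probability measures~$\pi$, so that the optimal value $\nd^r(\mathbb P,\tilde{\mathbb P})$ is unaffected by the replacement. One inclusion is immediate: the filtration is increasing, hence $\mathcal F_{t+1}\subseteq\mathcal F_T$, and any~$\pi$ with $\pi(A\times\tilde\Xi\mid\mathcal F_t\otimes\tilde{\mathcal F}_t)=P(A\mid\mathcal F_t)$ for all $A\in\mathcal F_T$ satisfies this a fortiori for all $A\in\mathcal F_{t+1}$. The substance of the lemma is the converse, and this is where the tower property enters.

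So I would assume that~$\pi$ satisfies, for each~$t$, the relaxed constraint $\pi(A\times\tilde\Xi\mid\mathcal F_t\otimes\tilde{\mathcal F}_t)=P(A\mid\mathcal F_t)$ for all $A\in\mathcal F_{t+1}$ (and, symmetrically, the relaxed constraint involving~$\tilde P$), and then prove by downward induction on $t=T,T-1,\dots,1$ the stronger assertion $Q_t$\,: \emph{``$\pi(A\times\tilde\Xi\mid\mathcal F_t\otimes\tilde{\mathcal F}_t)=P(A\mid\mathcal F_t)$ for all $A\in\mathcal F_T$''}. The base case $t=T$ is automatic, since $A\times\tilde\Xi$ is already $\mathcal F_T\otimes\tilde{\mathcal F}_T$-measurable and both sides reduce to~$\one_A$, viewed on $\Xi\times\tilde\Xi$ through~$\proj_1$ (so the replacement really only concerns the stages $t\le T-1$). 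For the inductive step, fix $A\in\mathcal F_T$; since $\mathcal F_t\otimes\tilde{\mathcal F}_t\subseteq\mathcal F_{t+1}\otimes\tilde{\mathcal F}_{t+1}$, the tower property for~$\pi$ together with the induction hypothesis~$Q_{t+1}$ give
\begin{align*}
	\pi\big(A\times\tilde\Xi\mid\mathcal F_t\otimes\tilde{\mathcal F}_t\big)
	&=\E_\pi\big[\,\pi(A\times\tilde\Xi\mid\mathcal F_{t+1}\otimes\tilde{\mathcal F}_{t+1})\mid\mathcal F_t\otimes\tilde{\mathcal F}_t\,\big]\\
	&=\E_\pi\big[\,P(A\mid\mathcal F_{t+1})\mid\mathcal F_t\otimes\tilde{\mathcal F}_t\,\big].
\end{align*}
Now $P(A\mid\mathcal F_{t+1})$ is a bounded $\mathcal F_{t+1}$-measurable function of~$\xi$ alone; writing it as a linear combination of indicators $\one_{A_k}$ with $A_k\in\mathcal F_{t+1}$ (an exact finite sum over the atoms of $\mathcal F_{t+1}$ in the tree case~\eqref{eq:ndTree}, a bounded monotone limit of such in general), the relaxed constraint at stage~$t$ and linearity (resp.\ monotone convergence) of conditional expectation yield $\E_\pi[\,P(A\mid\mathcal F_{t+1})\mid\mathcal F_t\otimes\tilde{\mathcal F}_t\,]=\E_P[\,P(A\mid\mathcal F_{t+1})\mid\mathcal F_t\,]$. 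A final application of the tower property, this time for~$P$, identifies the right-hand side with $P(A\mid\mathcal F_t)$, which is exactly~$Q_t$ for this~$A$; as $A\in\mathcal F_T$ was arbitrary, $Q_t$ holds. Running the identical argument with $(\Xi,(\mathcal F_t),P)$ and $(\tilde\Xi,(\tilde{\mathcal F}_t),\tilde P)$ interchanged disposes of the constraints involving~$\tilde P$, and the two inductions do not interact.

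I expect the only care required to be bookkeeping rather than mathematics: one must consistently identify an $\mathcal F_s$-measurable function on~$\Xi$ with its lift to $\Xi\times\tilde\Xi$, verify that the product filtration $(\mathcal F_t\otimes\tilde{\mathcal F}_t)_t$ is genuinely increasing so that each invocation of the tower property is legitimate, and --- in the non-discrete setting --- justify the passage from indicators of $\mathcal F_{t+1}$-sets to arbitrary bounded $\mathcal F_{t+1}$-measurable integrands by a routine monotone-class argument. In the tree setting of~\eqref{eq:ndTree} every conditional expectation is a finite sum, the step from indicators to general functions is mere linearity, and the whole proof collapses to two successive applications of the elementary tower property --- as the name of the lemma already advertises.
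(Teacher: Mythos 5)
Your argument is correct and is precisely the tower-property backward induction the paper has in mind: the paper itself offers no proof, deferring entirely to \citep[Lemma~2.43]{PflugPichlerBuch}, whose argument runs along the same lines you spell out (the trivial inclusion one way since $\mathcal F_{t+1}\subseteq\mathcal F_T$, and downward induction via the tower property plus a monotone-class extension from indicators to bounded $\mathcal F_{t+1}$-measurable integrands the other way). The only point you leave implicit is that $P(A\mid\mathcal F_{t+1})$, defined only $P$-almost surely, must be read $\pi$-almost surely as a function on $\Xi\times\tilde\Xi$, which requires the first marginal of $\pi$ to be $P$ --- immediate in the tree setting of~\eqref{eq:ndTree} and recovered from the unconditional ($t=0$) constraint in general.
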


\begin{proof}
	The proof is based on the tower property of the expectation and can be found in \citep[Lemma~2.43]{PflugPichlerBuch}. 
\end{proof}
As a result of the tower property the full problem~\eqref{eq:ndTree} can be calculated faster in a recursive way and the matrix for the constraints has not to be stored. We employ this result in an algorithm blow. For further details we refer to \citet[Chapter~2.10.3]{PflugPichlerBuch}.
The collection of all direct successors of node $i_t$ ($j_t$, resp.)\ is denoted by $i_t+$ ($j_t+$, resp.).

\begin{algorithm}[t]
	\KwIn{for all combinations of leaf nodes $i\in\mathcal N_T$ and $j\in\tilde{\mathcal N}_T$ with predecessors $(i_0,i_1,\dots,i_{T-1},i)$ and $(j_0,j_1,\dots,j_{T-1},j)$ set $\nd_T^r(i,j):= d\left((\xi_0,\xi_{i_1},\dots,\xi_i),\ (\tilde\xi_0,\tilde\xi_{j_1},\dots,\tilde\xi_j)\right)^r$}
	\KwOut{the optimal transport plan at the leaf nodes $i\in\mathcal N_T$ and $j\in\tilde{\mathcal N}_T$ is $\pi(i,j)=\pi_1(i_1,j_1\mid i_0,j_0)\cdot\dots\cdot\pi_{T-1}(i,j\mid i_{T-1},j_{T-1})$.}
	\SetKwFor{Loop}{for}{do}{end}
	\Loop{$t=T-1$ down to $0$ and every combination of inner nodes $i'\in\mathcal N_t$ and $j'\in\tilde{\mathcal N}_t$}{
		\text{solve the linear programs}
	\begin{align}
		\text{minimize}_{\text{ in }\pi\ }  & \sum_{i'\in i_t+,\, j' \in j_t+} \pi(i',j'\mid i_t,j_t)\cdot \nd_{t+1}^r(i',j')\label{eq:20}\\
	\text{subject to } & \sum_{j'\in j_t+} \pi(i',j'\mid i_t,j_t)=P(i'\mid i_t) , &i'\in i_t+,\\ 
		&\sum_{i'\in i_t+} \pi(i',j'\mid i_t,j_t)=\tilde P(j'\mid j_t), &j'\in j_t+, \\
		&\pi(i',j'\mid i_t,j_t)\ge 0
	\end{align}
	and denote its optimal value by $\nd_t^r(i_t,j_t)$.}
	\caption{Nested computation of the nested distance $\nd^r(\mathbb{P},\tilde{\mathbb P})$ of two tree-processes~\(\mathbb P\) and~\(\tilde{\mathbb P}\)}
	\KwResult{The nested distance is $\nd^r(\mathbb P,\tilde{\mathbb P})\coloneqq \nd^r_0(0,0)$}
	\label{alg:ndTreeRecursive}
\end{algorithm}

\section{Sinkhorn divergence\label{sec:SinkhornDistance}}

In what follows we consider the entropy-regularization of the Wasserstein
distance~\eqref{eq:Wasserstein} and characterize its dual. Moreover,
we recall Sinkhorn's algorithm, which allows and provides a considerably faster implementation. These results are combined then to accelerate the computation of the nested distance. 


\subsection{Entropy-regularized Wasserstein distance}

Interior point methods add a logarithmic penalty to the objective to force the optimal solution of the modified problem into the strict interior. The Sinkhorn distance proceeds similarly. The regularizing term $H(x)\coloneqq-\sum_{i,j}x_{ij}\log x_{ij}$ is added to the cost function in problem~\eqref{eq:Wasserstein}. This has shown beneficiary in other problem settings as well.
\begin{remark}\label{rem:phi}
	The mapping $\varphi(x)\coloneqq x\log x$ is convex and negative for $x\in(0,1)$ with continuous extensions $\varphi(0)=\varphi(1)=0$ so that $H\ge 0$, provided that all $x_{ij}\in[0,1]$.
\end{remark}
\begin{definition}[Sinkhorn divergence]\label{DefSinkhornDistance}
	The \emph{Sinkhorn divergence} is obtained by the optimization problem 
	\begin{subequations}
		\begin{align}\label{eq:Sinkhorn}
			\text{minimize}_{\text{ in } \pi\ } & { \sum_{i,j}\pi_{ij}\, d_{ij}^r-\frac1\lambda H(\pi)}\\
			\text{subject to } & \sum_j\pi_{ij}=p_i, &  & i=1,\dots,n,\\
			& \sum_i\pi_{ij}=\tilde p_j, &  & j=1,\dots,\tilde n,\\
			&  \pi_{ij}>0 &  & \text{ for all } i, j,\label{eq:Sinkhornb}
		\end{align}
	\end{subequations}
	where $d$ is a distance or a cost matrix and $\lambda>0$ is a regularization parameter. 
	With $\pi^S$ being the optimal transport in~\eqref{eq:Sinkhorn}--\eqref{eq:Sinkhornb} we denote the Sinkhorn divergence by
	\begin{equation*}
		d_S^r\coloneqq\sum_{i,j}\pi_{ij}^S\, d_{ij}^r
	\end{equation*}
	and the Sinkhorn divergence including the entropy by 
	\begin{equation*}
		de_S^r\coloneqq\sum_{i,j}\,\pi_{ij}^S\,d_{ij}^r-\frac1\lambda H\big(\pi^S\big).
	\end{equation*}
\end{definition}

We may mention here that we avoid the term Sinkhorn \emph{distance}
since for all $\lambda>0$ the Sinkhorn divergence $d_S^r$
is strictly positive and $de_S^r$ can be negative
for small $\lambda$ which violates the axioms of a distance given in Definition~\ref{def:Distance} above (particularly~\ref{enu:Nonnegativity},~\ref{enu:Triangle} and~\ref{enu:Strict}).
Strict positivity of~$d_S^r$ can be forced by a correction term, the so-called Sinkhorn Loss (see \citet[Definition 2.3]{BigotCazelles2019}) or by employing the cost matrix $d\cdot\one_{p\neq\tilde p}$ instead.
\begin{remark}
	The strict inequality constraint~\eqref{eq:Sinkhornb} is not a restriction. Indeed, the mapping $\varphi(x)$ defined in Remark~\ref{rem:phi} has derivative $\varphi'(0)=-\infty$ and thus it follows that every optimal measure satisfies the strict inequality~$\pi_{ij}>0$ for $\lambda>0$.
\end{remark}

We have the following inequalities.
\begin{prop}[Comparison of Sinkhorn and Wasserstein]\label{prop:Inequality1} 
	It holds that 
	\begin{equation}\label{eq:16}
		de_S^r\leq d_W^r\leq d_S^r.
	\end{equation}
\end{prop}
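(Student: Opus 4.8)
The plan is to prove the two inequalities in~\eqref{eq:16} separately, exploiting the fact that the Wasserstein problem~\eqref{eq:Wasserstein} and the Sinkhorn problem~\eqref{eq:Sinkhorn}--\eqref{eq:Sinkhornb} share the same affine feasible set (the transport polytope with marginals~$p$ and~$\tilde p$), and that the entropy $H$ is nonnegative on this set since all entries lie in $[0,1]$ (Remark~\ref{rem:phi}).

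\medskip

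\noindent\textbf{Left inequality $de_S^r\le d_W^r$.} First I would observe that the optimal Wasserstein plan $\pi^W$ is feasible for~\eqref{eq:Sinkhorn} (up to the harmless strictness of the inequality constraint, which by the remark following the definition is not actually restrictive — or one argues with a limiting/perturbation argument). Since $\pi^S$ minimizes the \emph{entropy-regularized} objective $\sum_{i,j}\pi_{ij}d_{ij}^r-\frac1\lambda H(\pi)$ over that feasible set, plugging in $\pi^W$ gives
\[
	de_S^r=\sum_{i,j}\pi_{ij}^S d_{ij}^r-\tfrac1\lambda H(\pi^S)\ \le\ \sum_{i,j}\pi_{ij}^W d_{ij}^r-\tfrac1\lambda H(\pi^W)\ \le\ \sum_{i,j}\pi_{ij}^W d_{ij}^r= d_W^r,
\]
where the last inequality uses $H(\pi^W)\ge0$. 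This settles the left inequality.

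\medskip

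\noindent\textbf{Right inequality $d_W^r\le d_S^r$.} Here I would use optimality of $\pi^W$ for the \emph{unregularized} Wasserstein problem: since $\pi^S$ is feasible for~\eqref{eq:Wasserstein}, we get $d_W^r=\sum_{i,j}\pi_{ij}^W d_{ij}^r\le\sum_{i,j}\pi_{ij}^S d_{ij}^r=d_S^r$. Chaining the two displays yields $de_S^r\le d_W^r\le d_S^r$, as claimed.

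\medskip

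\noindent The only genuinely delicate point — and the one I would be careful about — is the passage between the open constraint~\eqref{eq:Sinkhornb} and the closed transport polytope: $\pi^W$ may have zero entries and hence is not strictly feasible for~\eqref{eq:Sinkhorn}. I would handle this either by invoking the remark that the $\varphi'(0)=-\infty$ behavior forces the infimum over the closed polytope to be attained in the relative interior (so the infimum over the open set equals the infimum over its closure), or by a direct approximation: perturb $\pi^W$ toward the uniform-ish coupling $p\tilde p^\top$ along the segment $\pi^W_\varepsilon=(1-\varepsilon)\pi^W+\varepsilon\,p\tilde p^\top$, which is strictly feasible, and let $\varepsilon\downarrow0$ using continuity of both $\pi\mapsto\sum_{i,j}\pi_{ij}d_{ij}^r$ and $\pi\mapsto H(\pi)$ (the latter being continuous on the compact polytope thanks to the continuous extension $\varphi(0)=0$). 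Everything else is a one-line comparison of objective values on a common feasible set.
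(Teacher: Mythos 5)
Your proof is correct and follows essentially the same route as the paper: the left inequality comes from comparing the regularized and unregularized objectives using $H\ge 0$ on the transport polytope, and the right inequality from feasibility of $\pi^S$ for the Wasserstein problem~\eqref{eq:Wasserstein}. Your extra care about the open constraint~\eqref{eq:Sinkhornb} (perturbing $\pi^W$ into the relative interior) addresses a point the paper's proof silently glosses over, but it does not change the argument.
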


\begin{proof}
	Recall that $\pi \log\pi\le0$ for all $\pi\le1$ and  thus it holds that
	$\sum_{i,j}\pi_{ij}\,d_{ij}^r+\frac1\lambda\sum_{i,j}\pi_{ij}\log\pi_{ij}\leq\sum_{i,j}\pi_{ij}\,d_{ij}^r$
	for all $\pi\in(0,1]^{n\times\tilde n}$. It follows that
	\[	\min_{\pi}\ \sum_{i,j}\pi_{ij}\,d_{ij}^r +\frac1\lambda\sum_{i,j}\pi_{ij}\log\pi_{ij}
	 	\leq\min_\pi\ \sum_{i,j}\pi_{ij}\,d_{ij}^r\]
	and thus the first inequality. The remaining inequality is clear by the definition of the Wasserstein distance.
\end{proof}
Both Sinkhorn divergences $d_S^r$ and $de_S^r$ approximate the Wasserstein distance $d_W^r$, and we have convergence for $\lambda\to\infty$ to $d_W^r$. The following proposition provides precise bounds.
\begin{prop}\label{prop:SinkhornInequality}
	For every $\lambda>0$ we have 
	\begin{equation}\label{eq:13}
		0\le d_S^r-d_W^r\le \frac1\lambda \left( H(\pi^S)-H(\pi^W)\right)
	\end{equation} 
	and
	\begin{equation}\label{eq:15}		
		0\le d_W^r - de_S^r \le \frac1\lambda H(\pi^S)\leq\frac1\lambda H(p\cdot\tilde p^{\top})
	\end{equation}
	with $p=(p_1,\dots,p_n)$ and $\tilde p=(\tilde p_1,\dots,\tilde p_{\tilde n})$,
	respectively. 
\end{prop}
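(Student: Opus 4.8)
The plan is to establish both chains of inequalities by comparing the optimal values of the Wasserstein problem~\eqref{eq:Wasserstein} and the Sinkhorn problem~\eqref{eq:Sinkhorn} through their respective optimizers~$\pi^W$ and~$\pi^S$, exploiting the optimality of each in its own problem and the sign of the entropy term. The left inequalities in~\eqref{eq:13} and~\eqref{eq:15} are already contained in Proposition~\ref{prop:Inequality1}, so the substance is in the upper bounds. First I would prove the right inequality in~\eqref{eq:13}. Since $\pi^W$ is feasible for the Sinkhorn problem (it satisfies the marginal constraints, and any entries that vanish can be treated by the continuous extension $\varphi(0)=0$, or by noting that $\pi^S$ is the genuine minimizer), optimality of $\pi^S$ for the regularized objective gives
\[
	\sum_{i,j}\pi_{ij}^S\,d_{ij}^r-\tfrac1\lambda H(\pi^S)\ \le\ \sum_{i,j}\pi_{ij}^W\,d_{ij}^r-\tfrac1\lambda H(\pi^W)=d_W^r-\tfrac1\lambda H(\pi^W).
\]
Rearranging, $d_S^r=\sum_{i,j}\pi_{ij}^S\,d_{ij}^r\le d_W^r+\tfrac1\lambda\big(H(\pi^S)-H(\pi^W)\big)$, which is precisely the claimed bound.

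Next I would treat~\eqref{eq:15}. For the upper bound $d_W^r-de_S^r\le\tfrac1\lambda H(\pi^S)$, use optimality of $\pi^W$ in the (unregularized) Wasserstein problem: since $\pi^S$ is feasible there, $d_W^r=\sum_{i,j}\pi_{ij}^W d_{ij}^r\le\sum_{i,j}\pi_{ij}^S d_{ij}^r$. Subtracting $de_S^r=\sum_{i,j}\pi_{ij}^S d_{ij}^r-\tfrac1\lambda H(\pi^S)$ yields $d_W^r-de_S^r\le\tfrac1\lambda H(\pi^S)$ directly. The final inequality $H(\pi^S)\le H(p\cdot\tilde p^\top)$ is the one genuinely new ingredient: it says the entropy is maximized, among all couplings with marginals $p$ and $\tilde p$, by the independent coupling $\pi_{ij}=p_i\tilde p_j$. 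I would prove this either by the standard information-theoretic identity $H(p\cdot\tilde p^\top)-H(\pi)=\sum_{i,j}\pi_{ij}\log\frac{\pi_{ij}}{p_i\tilde p_j}=\mathrm{KL}(\pi\,\|\,p\otimes\tilde p)\ge0$ (nonnegativity of relative entropy, itself a one-line consequence of Jensen or of $\log x\le x-1$), or, to stay self-contained, by a direct Lagrange/concavity argument: $H$ is concave on the marginal polytope and its unconstrained-up-to-marginals maximizer is found by setting $\partial_{\pi_{ij}}\big(H(\pi)+\sum_i\alpha_i\sum_j\pi_{ij}+\sum_j\beta_j\sum_i\pi_{ij}\big)=0$, giving $\pi_{ij}=e^{\alpha_i+\beta_j-1}$, i.e.\ a product form, which the marginal constraints force to be exactly $p_i\tilde p_j$.

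The main obstacle I anticipate is not any single hard estimate but rather the bookkeeping around the strict-positivity constraint~\eqref{eq:Sinkhornb}: $\pi^W$ is generally sparse (an extreme point of the transport polytope), so it is not literally feasible for~\eqref{eq:Sinkhorn}. This is handled cleanly by the remark already in the paper — that $\varphi'(0)=-\infty$ forces $\pi^S>0$ — together with the continuous extension $\varphi(0)=0$, so that the regularized objective is well-defined and lower semicontinuous on the closed polytope and the comparison $\sum\pi^S_{ij}d_{ij}^r-\tfrac1\lambda H(\pi^S)\le\sum\pi^W_{ij}d_{ij}^r-\tfrac1\lambda H(\pi^W)$ still holds by passing to the closure (or by approximating $\pi^W$ with strictly positive couplings and letting the perturbation vanish). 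A secondary point worth stating explicitly is why $H(\pi^W)\ge 0$, needed implicitly for consistency of the bounds: this is Remark~\ref{rem:phi}, since all entries lie in $[0,1]$. With these caveats noted, the whole argument is a short sequence of "plug the other problem's optimizer into this problem" steps plus the maximum-entropy fact for the independent coupling.
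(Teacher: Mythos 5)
Your proposal is correct and follows essentially the same route as the paper: the left inequalities come from Proposition~\ref{prop:Inequality1}, the upper bound in~\eqref{eq:13} from optimality of $\pi^S$ in the regularized objective compared against the feasible $\pi^W$, the upper bound in~\eqref{eq:15} from $d_W^r\le d_S^r$ together with $d_S^r-de_S^r=\frac1\lambda H(\pi^S)$, and the final bound from the maximum-entropy property of the product coupling (your KL/Jensen argument is the same fact the paper invokes as the log sum inequality). Your extra care about $\pi^W$ lying on the boundary of the transport polytope is a legitimate refinement the paper glosses over, but it does not change the argument.
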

\begin{proof}
	The first inequalities follow from~\eqref{eq:16} and from optimality of~$\pi^S$ in the inequality
	\begin{equation*}
		d_S^r-\frac1\lambda H(\pi^S)\le d_W^r-\frac1\lambda H(\pi^W).
	\end{equation*}
	The latter again with~\eqref{eq:16} and $d_S^r-de_S^r= \frac1\lambda H\big(\pi^S\big)$. Finally, by the log sum inequality, $H(\pi)\le H\big(p\cdot \tilde p^\top\big)$ for every measure $\pi$ with marginals $p$ and $\tilde p$.
\end{proof}
\begin{remark}\label{rem:3}
	As a consequence of the log sum inequality we obtain as well that $H(\pi^S)\le \log n+\log\tilde n$. The inequalities~\eqref{eq:13} and ~\eqref{eq:15} thus give strict upper bounds in comparing the Wasserstein distance and the Sinkhorn divergence.
\end{remark}

\paragraph{Alternative definitions.}
There exist alternative definitions of the Sinkhorn divergence which
we want to mention here. The first alternative definition involves
the Kullback\textendash Leibler divergence $D_\textit{KL}(\pi\mid P\otimes\tilde P)$,
which is defined as
\[	D_\textit{KL}(\pi\mid P\otimes\tilde P)\coloneqq-\sum_{i,j}\pi_{ij}\log\frac{\pi_{ij}}{p_i\,\tilde p_j}=H(P)+H(\tilde P)-H(\pi),\]
where the latter equality is justified provided that $\pi$ has marginal measures~$P$ and~$\tilde P$.
The Sinkhorn divergence (in the alternative definition) is the $r$\nobreakdash-th root of the optimal value of
\begin{subequations}
	\begin{align}\label{eq:KLDivergence}
	\text{minimize}_{\text{ in }\pi} & \sum_{i,j}\pi_{ij}\,d_{ij}^r\\
		\text{subject to } & \sum_j\pi_{ij}=p_i, &  & i=1,\dots,n,\\
		& \sum_i\pi_{ij}=\tilde p_j, &  & j=1,\dots,\tilde n,\\
		& \pi_{ij}>0 &&\text{and}\\
		& D_\textit{KL}(\pi\mid P\otimes\tilde P)\leq\alpha & & \text{for all }i,j, \label{eq:KL}
	\end{align}
\end{subequations}
where $\alpha\geq0$ is the regularization parameter. For each $\alpha$
in~\eqref{eq:KL} we have by the duality theory a corresponding $\lambda$ in~\eqref{eq:Sinkhorn} such that the optimal values coincide.
Let $\alpha>0$ and $\pi^\textit{KL}$ be the solution to problem~\eqref{eq:KLDivergence}--\eqref{eq:KL} with Lagrange multipliers $\beta$ and $\gamma$. Then the optimal value of problem~\eqref{eq:KLDivergence} equals $d_S^r$ from~\eqref{eq:Sinkhorn} with 
\[
	\lambda=-\frac{\log(\pi^\textit{KL}_{ij})+1}{d_{ij}+\beta_i+\gamma_j}
\]
for any $i\in\{1,\dots,n\}$ and $j\in\{1,\dots,\tilde n\}$. For
further information and illustration we refer to \citet[Section~3]{Cuturi2013}. 
\medskip

A further, potential definition employs a different entropy regularization is given by
\[\tilde H(\pi)=-\sum_{i,j}\pi_{ij}\cdot(\log\pi_{ij}-1).\]
\citet{Luise2018Differential} use this definition for Sinkhorn approximation
for learning with Wasserstein distance and proof an exponential convergence.
This definition leads to a similar matrix decomposition and iterative
algorithm described in the following sections.

\subsection{Dual representation of Sinkhorn}

We shall derive Sinkhorn's algorithm and its extension to the nested distance via duality.  To this end consider the Lagrangian function 
\begin{equation}\label{eq:Lagrange}
	L(\pi;\beta,\gamma)\coloneqq 
	\sum_{i,j}\pi_{ij}\,d_{ij}+\frac1\lambda \sum_{i,j}\pi_{ij}\log\pi_{ij} +\beta^\top(p-\pi\cdot\one)+(\tilde p-\one^{\top}\cdot\pi)^\top\gamma
\end{equation}
of the problem~\eqref{DefSinkhornDistance}.
The partial derivatives are
\begin{equation}\label{eq:PartialDerivative}
	\frac{\partial L}{\partial\pi_{ij}}=\frac1\lambda \left(\log\pi_{ij}+1\right)+d_{ij}-\beta_i-\gamma_j=0,
\end{equation}
and it follows from~\eqref{eq:PartialDerivative} that the optimal measure has entries
\begin{align}\label{eq:piRepresentation}
	\pi_{ij}^* & =\exp\left(-\lambda(d_{ij}-\beta_i-\gamma_j)-1\right)\\
	& =\diag\Big(\exp(\lambda\,\beta-\nicefrac12)\Big)\cdot\exp(-\lambda\,  d)\cdot\diag\Big(\exp(\lambda\,\gamma-\nicefrac12)\Big).
\end{align}
By inserting $\pi_{ij}^*$ in the Lagrangian function~$L$ we get the convex dual function
\begin{align*}
	\MoveEqLeft[2] d(\beta,\gamma) \coloneqq\inf_{\pi}L(\pi;\beta,\gamma)=L(\pi^{*};\beta,\gamma)\\
 	& =\sum_{i,j}d_{ij}\cdot e^{-\lambda(d_{ij}-\beta_i-\gamma_j)-1}-\frac1\lambda \sum_{i,j}e^{-\lambda(d_{ij}-\beta_i-\gamma_j)-1}\cdot\big(\lambda(d_{ij}-\beta_i-\gamma_j)+1\big)\\
	 & \qquad+\sum_i\beta_i\left(p_i-\sum_je^{-\lambda(d_{ij}-\beta_i-\gamma_j)-1}\right)+\sum_j\gamma_j\left(\tilde p_j-\sum_ie^{-\lambda(d_{ij}-\beta_i-\gamma_j)-1}\right)\\
 	& =-\sum_{i,j}\left(\beta_i+\gamma_j+\frac1\lambda \right)e^{-\lambda(d_{ij}-\beta_i-\gamma_j)-1}+\sum_i\beta_ip_i+\sum_j\gamma_j\tilde p_j\\
 	& \qquad-\sum_i\beta_i\left(\sum_je^{-\lambda(d_{ij}-\beta_i-\gamma_j)-1}\right)-\sum_j\gamma_j\left(\sum_ie^{-\lambda(d_{ij}-\beta_i-\gamma_j)-1}\right)\\
 	& =\sum_i\beta_i\,p_i+\sum_j\gamma_j\,\tilde p_j-\frac1\lambda \sum_{i,j}e^{-\lambda(d_{ij}-\beta_i-\gamma_j)-1}.
\end{align*}
The dual problem thus is
\begin{align*}
	\text{maximize}_{\text{ in }\beta, \gamma} &  \sum_i\beta_i\,p_i+\sum_j\gamma_j\,\tilde p_j-\frac1\lambda \sum_{i,j}e^{-\lambda(d_{ij}-\beta_i-\gamma_j)-1}\\
	\text{subject to } & \beta\in\mathbb R^n,\ \gamma\in\mathbb R^{\tilde n}.
\end{align*}
Due to $\sum_{i,j}e^{-\lambda(d_{ij}-\beta_i-\gamma_j)-1}=1$
we may write the latter problem as 
\begin{subequations}	
	\begin{align}\label{eq:14}
		\text{maximize}_{\text{ in }\beta,\gamma\ } & \sum_i\beta_i\,p_i+\sum_j\gamma_j\,\tilde p_j\\
		\text{subject to } & \sum_{i,j}e^{-\lambda(d_{ij}-\beta_i-\gamma_j)-1}=1\text{ and } \beta\in\mathbb R^n,\ \gamma\in\mathbb R^{\tilde n}.\label{eq:14b}
	\end{align}
\end{subequations}

\begin{remark}
	We deduce from~\eqref{eq:14b} that $-\lambda \left(d_{ij}- \beta_i- \gamma_j\right)- 1\leq0$, or
	\begin{equation}\label{eq:17}
		\beta_i+\gamma_j\le d_{ij}+\frac1\lambda \quad\text{ for all }i, j
	\end{equation}
	provided that~$\lambda>0$.
	It is thus apparent that~\eqref{eq:14}--\eqref{eq:14b} is a relaxation of problem~\eqref{eq:WassersteinDual}--\eqref{eq:WassersteinDualb} together  with the constraint~\eqref{eq:17}.
	As well, observe that both problems coincide for $\lambda\to\infty$ in~\eqref{eq:14b}.
\end{remark}

\subsection{Sinkhorn's algorithm\label{SinkhornAlgorithm}}
To derive Sinkhorn's algorithm we consider the Lagrangian function~\eqref{eq:Lagrange} again, but now for the remaining variables.  Similar to $\pi^*$ in~\eqref{eq:piRepresentation}, the gradients are
\begin{equation}\label{eq:derivitiveBeta}
	\frac{\partial L}{\partial\beta_i}=p_i- \sum_{j=1}^{\tilde n}\pi_{ij}= p_i-\sum_{j=1}^{\tilde n}e^{-\lambda(d_{ij}-\beta_i-\gamma_j)-1}=0
\end{equation}
and 
\begin{equation}\label{eq:derivitiveGamma}
	\frac{\partial L}{\partial\gamma_j}= \tilde p_j-\sum_{i=1}^n\pi_{ij} =\tilde p_j-\sum_{i=1}^ne^{-\lambda(d_{ij}-\beta_i-\gamma_j)-1}=0
\end{equation}
so that the equations
\begin{equation*}
	\beta_i=\frac1\lambda \log\left(\frac{p_i}{\sum_{j=1}^{\tilde n}e^{-\lambda(d_{ij}-\gamma_j)-1}}\right)
	\quad\text{and}\quad
	\gamma_j=\frac1\lambda \log\left(\frac{\tilde p_j}{\sum_{i=1}^ne^{-\lambda(d_{ij}-\beta_i)-1}}\right)
\end{equation*}
follow.
To avoid the logarithm introduce $\tilde \beta_i\coloneqq e^{\lambda\,\beta_i-\nicefrac12}$ and $\gamma_j\coloneqq e^{\lambda\,\gamma_j-\nicefrac12}$ and rewrite the latter equations as
\begin{equation}\label{eq:Sinkhorn1}
	\tilde\beta_i= \frac{p_i}{\sum_{j=1}^{\tilde n}e^{-\lambda\,d_{ij}}\,\tilde \gamma_j} \quad\text{and}\quad
	\tilde\gamma_j= \frac{\tilde p_j}{\sum_{i=1}^n \tilde \beta_i\,e^{-\lambda\,d_{ij}}},
\end{equation}
while the optimal transition plan~\eqref{eq:piRepresentation} is
\[\pi_{ij}^*= \tilde\beta_i\cdot e^{-\lambda\,d_{ij}} \cdot \tilde \gamma_j.\]

\begin{algorithm}[t]
	\KwIn{distance matrix $d^r\in\mathbb R_{\geq0}^{n\times\tilde n}$, probability vectors $p\in\mathbb R_{\geq0}^n$, $\tilde p\in\mathbb R_{\geq0}^{\tilde n}$, regularization parameter $\lambda>0$, stopping criterion and a starting value $\tilde\gamma=(\tilde\gamma_1,\dots,\tilde\gamma_{\tilde n})$}
	\KwOut{$\tilde\beta$, $\tilde\gamma$ for $\diag(\tilde\beta)\cdot e^{-\lambda\, d^r}\cdot \diag(\tilde\gamma)$}
	 set
	 \begin{equation}\label{eq:MatrixK}
		k_{ij}\coloneqq \exp\big(-\lambda\,d_{ij}^r\big).
	 \end{equation}

	 \While{stopping criterion is not satisfied}{
	  \For{$i=1$ \KwTo $n$}{
	  $\tilde\beta_i\leftarrow\frac{p_i}{\sum_{j=1}^{\tilde n}k_{ij}\,\tilde\gamma_j}$
		}
		\For{$j=1$ \KwTo $\tilde n$}{
		$\tilde\gamma_j\leftarrow\frac{\tilde p_j}{\sum_{i=1}^n\tilde\beta_i\,k_{ij}}$}
	 }
	\KwResult{The matrix $\pi_{ij}^*=\tilde\beta_i \, e^{-\lambda\,d_{ij}^r}\,\tilde\gamma_j=\tilde\beta_i\,k_{ij}\,\tilde\gamma_j$ solves the relaxed Wasserstein problem~\eqref{eq:Sinkhorn}--\eqref{eq:Sinkhornb}.}
	\caption{Sinkhorn's iteration}\label{alg:SinkhornIteration}
\end{algorithm}

The simple starting point of Sinkhorn's iteration is that~\eqref{eq:Sinkhorn1} can be used to determine~$\tilde\beta$ and~$\tilde\gamma$ alternately.
Indeed, from~\eqref{eq:derivitiveBeta} and~\eqref{eq:derivitiveGamma}
we infer that~$\pi^*$ is a doubly stochastic matrix and Sinkhorn's theorem (cf.\ \citet{Sinkhorn1967a, Sinkhorn1967}) for the matrix decomposition ensures that iterating~\eqref{eq:Sinkhorn1} converges and the vectors $\tilde\beta$ and $\tilde\gamma$ are unique up to a scalar.
Algorithm~\ref{alg:SinkhornIteration} summarizes the individual steps again.
\begin{remark}[Central path]
	We want to emphasize that for changing the regularization parameter~$\lambda$ it is note necessary to recompute all powers in~\eqref{eq:MatrixK}. Indeed, increasing $\lambda$ to $2\cdot\lambda$, for example, corresponds to raising all entries in the matrix~\eqref{eq:MatrixK} to the power~$2$, etc.
\end{remark}
\begin{remark}[Softmax]
	The expression~\eqref{eq:Sinkhorn1} resembles to what is known as the \emph{Gibbs measure} and to the \emph{softmax} in data science. 
\end{remark}
\begin{remark}[Historical remark]
	In the literature, this approach is also known as \emph{matrix scaling} (cf.\ \citet{RoteZachariasen}), 
	RAS (cf.\ \citet{Bachem}) as well as Iterative Proportional Fitting
	(cf.\ \citet{Ruschendorf1995}). \citet{Kruithof1937} used the method
	for the first time in telephone forecasting. 
	The importance of this iteration scheme for data science was probably observed in \citet[Algorithm~1]{Cuturi2013} for the first time. 
\end{remark}

\section{Entropic transitions\label{sec:EntropyRegularizedNestedDistance}}
This section extends the preceding sections and combines the Sinkhorn divergence and the nested distance by incorporating the regularized entropy $\frac1\lambda H(\pi)$ to the recursive nested distance Algorithm~\ref{alg:ndTreeRecursive} and investigate its properties and consequences. We characterize the nested Sinkhorn divergence first. The main result is used to exploit duality.

\subsection{Nested Sinkhorn divergence}
Let $\nde^{(t)}$ be the matrix of incremental divergences of sub-trees at stage~$t$.
Analogously to~\eqref{eq:20} we consider the conditional version of the problem~\eqref{eq:Sinkhorn} and denote by $\beta_{i_tj_t}$ and $\gamma_{j_ti_t}$ the pair of optimal Lagrange parameters associated with the problem 
\begin{align}\label{program:SinkhornRecursive}
	\text{minimize}_{\text{ in} \pi} & \sum_{i'\in i_t+,j'\in j_t+}\pi(i',j'\mid i_t,j_t)\cdot\nde^{(t+1)}(i',j')\\
	& \hspace{4cm}+\frac1\lambda\pi(i',j'\mid i_t,j_t)\cdot\log\pi(i',j'\mid i_t,j_t)\\
	\text{subject to } & \sum_{j'\in j_t+}\pi(i',j'\mid i_t,j_t)=P(i'\mid i_t),\hspace{2cm}i'\in i_t+,\\
	& \sum_{i'\in i_t+}\pi(i',j'\mid i_t,j_t)=\tilde P(j'\mid j_t),\hspace{2cm}j'\in j_t+,\\
	& \pi(i',j'\mid i_t,j_t)>0,
\end{align}
where $\pi(i',j'|i_t,j_t)=\exp \left(-\lambda\big(\nde_{i_tj_t}^{(t+1)} -\beta_{i_tj_t}- \gamma_{j_ti_t}\big)-1\right)$.
The optimal value is the new divergence $\nde^{(t)}(i_t,j_t)$.
Computing the nested distance recursively from $t=T-1$ down to~$0$ we get 
\begin{align}\label{eq:dRecursiveSinkhorn}
	\pi_{ij} & =\pi_1(i_1,j_1\mid i_0,j_0)\cdot\ldots\cdot\pi_{T-1}(i,j\mid i_{T-1},j_{T-1})\\
	& =e^{-\lambda(\nde_{i_0j_0}^{(1)}-\beta_{i_0j_0}-\gamma_{j_0i_0})-1}\cdot\ldots\cdot e^{-\lambda(\nde_{i_{T-1}j_{T-1}}^{(T)}-\beta_{i_{T-1}j_{T-1}}-\gamma_{j_{T-1}i_{T-1}})-1}\\
	& =\exp\left(-T-\lambda\sum_{t=0}^{T-1}\nde_{i_tj_t}^{(t+1)}-\beta_{i_tj_t}-\gamma_{j_ti_t}\right),
\end{align}
where $i\in\mathcal N_T$ and $j\in\tilde{\mathcal N}_T$ are the leaf nodes with predecessors $(i_0,i_1,\dots,i_{T-1},i)$ and $(j_0,j_1,\dots,j_{T-1},j)$.
As above introduce
\[\tilde\beta_{i_tj_t}\coloneqq \exp{\big(\lambda\,\beta_{i_tj_j}-\nicefrac12\big)} \quad\text{and}\quad
\tilde\gamma_{j_ti_t}\coloneqq\exp{\big(\lambda\,\gamma_{j_ti_t}-\nicefrac12\big)}.\] 
Combining the components it follows that
\begin{align}\label{dRecurSum}
	\pi_{ij} & =\exp\left(-T-\lambda\sum_{t=0}^{T-1} \nde_{i_tj_t}^{(t+1)}-\beta_{i_tj_t}-\gamma_{j_ti_t}\right)\\
	& =\prod_{t=0}^{T-1}\tilde\beta_{i_tj_t}\exp\left(-\lambda\, \nde_{i_tj_t}^{(t+1)}\right)\, \tilde\gamma_{j_ti_t},
\end{align}
where the product is the entry-wise product (Hadamard product).

The following theorem summarizes the relation of the nested distance with the Sinkhorn divergence.

\begin{theorem}[Entropic relaxation of the nested distance]\label{thm:NestedSinkhorn}
	The recursive solution~\eqref{program:SinkhornRecursive} (\eqref{eq:dRecursiveSinkhorn}, resp.)\ coincides with the optimal transport plan given by
	\begin{align}\label{eq:SinkhornFull}
		\text{minimize}_{\text{ in }\pi} & \sum_{i,j}\pi_{ij}\cdot d_{ij}^r+\frac1\lambda\pi_{ij}\cdot\log\big(\pi_{ij}\big)\\
		\text{subject to } & \sum_{j\succ j_t+}\pi(i,j\mid i_t,j_t)=P(i\mid i_t), & i_t\prec i,j_t,\\
		& \sum_{i\succ i_t+}\pi(i,j\mid i_t,j_t)=\tilde P(j\mid j_t), & j_t\prec j,i_t,\\
		& \pi_{ij}>0\,\,\text{and}\,\,\sum_{i,j}\pi_{ij}=1.
	\end{align}
\end{theorem}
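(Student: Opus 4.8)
The plan is to reduce the claim to a backward--induction (dynamic programming) argument along the stages of the tree, the only genuinely analytic ingredient being the chain rule for entropy. For a feasible $\pi$ of~\eqref{eq:SinkhornFull} write $\pi(i_t,j_t)$ for the marginal probability of the node pair $(i_t,j_t)$, so that $\pi(i_0,j_0)=1$ and $\pi(i,j)=\pi_{ij}$ at the leaves, and for a direct successor $(i_{t+1},j_{t+1})$ of $(i_t,j_t)$ the conditional~\eqref{eq:piCond} reads $\pi(i_{t+1},j_{t+1}\mid i_t,j_t)=\pi(i_{t+1},j_{t+1})/\pi(i_t,j_t)$. First I would record that~\eqref{eq:SinkhornFull} has a \emph{unique} minimizer: by~\eqref{eq:piCond} the nested constraints become affine in $\pi$ (cf.\ the remark following~\eqref{eq:piCond}), so the feasible set is convex, while the objective is linear plus $\frac1\lambda\sum_{i,j}\pi_{ij}\log\pi_{ij}$ with strictly convex summands $x\mapsto x\log x$, hence strictly convex. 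It therefore suffices to show that the plan~\eqref{eq:dRecursiveSinkhorn} produced by the recursion~\eqref{program:SinkhornRecursive} is feasible for~\eqref{eq:SinkhornFull} and attains its optimal value.

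Feasibility is immediate: each conditional problem~\eqref{program:SinkhornRecursive} enforces $\sum_{j'\in j_t+}\pi(i',j'\mid i_t,j_t)=P(i'\mid i_t)$, $\sum_{i'\in i_t+}\pi(i',j'\mid i_t,j_t)=\tilde P(j'\mid j_t)$ and strict positivity, and multiplying these conditionals along the branches as in~\eqref{eq:dRecursiveSinkhorn} yields a strictly positive probability measure on the product tree whose immediate conditional marginals at every stage are $P(\cdot\mid i_t)$ and $\tilde P(\cdot\mid j_t)$. By the tower property, Lemma~\ref{TowerProperty}, these immediate--successor conditions are equivalent to the full nested constraints of~\eqref{eq:SinkhornFull} (the lemma concerns only the constraints and is insensitive to the entropy term in the objective), so~\eqref{eq:dRecursiveSinkhorn} is admissible.

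For optimality, telescoping the conditionals along the path to a leaf $(i,j)$ gives $\log\pi_{ij}=\sum_{t=0}^{T-1}\log\pi(i_{t+1},j_{t+1}\mid i_t,j_t)$; multiplying by $\pi_{ij}$, summing over leaves and regrouping the double sum stage by stage turns the regularizer into a sum, weighted by the reaching probabilities $\pi(i_t,j_t)$, of the conditional entropies at the inner nodes. Together with $d_{ij}^r=\nde^{(T)}(i,j)$ at the leaves this shows that the objective of~\eqref{eq:SinkhornFull}, evaluated at $\pi$, equals $V^{(0)}(i_0,j_0)$, where $V^{(T)}(i,j)\coloneqq d_{ij}^r$ and
\[
	V^{(t)}(i_t,j_t)\coloneqq\sum_{i'\in i_t+,\ j'\in j_t+}\pi(i',j'\mid i_t,j_t)\Bigl(V^{(t+1)}(i',j')+\tfrac1\lambda\log\pi(i',j'\mid i_t,j_t)\Bigr).
\]
One then minimizes $V^{(0)}(i_0,j_0)$ over $\pi$ by backward induction on $t$. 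For fixed $t$ and $(i_t,j_t)$ the quantity $V^{(t)}(i_t,j_t)$ depends on the conditionals at $(i_t,j_t)$ and, through the $V^{(t+1)}(i',j')$, only on conditionals strictly below the children $(i',j')$; since by Lemma~\ref{TowerProperty} the nested constraints decouple node by node, these blocks of variables are independent, and since the weights $\pi(i',j'\mid i_t,j_t)$ are nonnegative the minimization distributes over the sum and into the disjoint subtrees. With the induction hypothesis $\min V^{(t+1)}(i',j')=\nde^{(t+1)}(i',j')$ this yields
\[
	\min V^{(t)}(i_t,j_t)=\min\sum_{i'\in i_t+,\ j'\in j_t+}\pi(i',j'\mid i_t,j_t)\Bigl(\nde^{(t+1)}(i',j')+\tfrac1\lambda\log\pi(i',j'\mid i_t,j_t)\Bigr)=\nde^{(t)}(i_t,j_t),
\]
the middle problem being exactly~\eqref{program:SinkhornRecursive}; moreover its argmin is the transport returned by~\eqref{program:SinkhornRecursive}, so the minimizing conditionals assemble, via the product~\eqref{dRecurSum}, into~\eqref{eq:dRecursiveSinkhorn}. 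Taking $t=0$ shows that the minimum of~\eqref{eq:SinkhornFull} equals $\nde^{(0)}(i_0,j_0)$ and is attained at~\eqref{eq:dRecursiveSinkhorn}, which by uniqueness is therefore \emph{the} optimal transport plan of~\eqref{eq:SinkhornFull}.

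The main obstacle is the bookkeeping in the last step: one must (a) carry out the chain--rule rearrangement carefully, keeping in mind that the cost $d_{ij}^r$ lives only on the leaves and is transported upward unchanged, so that no additivity of $d$ across stages is needed, and (b) justify interchanging the stagewise minimizations with the reaching--probability--weighted sums, which is precisely where strict positivity of $\pi$ (hence of all $\pi(i_t,j_t)$) and the node--by--node decoupling of the nested constraints from Lemma~\ref{TowerProperty} are used. A more computational alternative would bypass (a) by checking directly that the explicit product form~\eqref{dRecurSum} satisfies the Karush--Kuhn--Tucker conditions of~\eqref{eq:SinkhornFull}, building the multipliers of the global nested constraints out of the stagewise $\beta_{i_tj_t}$ and $\gamma_{j_ti_t}$; this trades the chain--rule argument for a longer Lagrangian computation.
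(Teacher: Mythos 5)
Your proposal is correct and follows essentially the same route as the paper's proof: factor the plan into conditionals, use the chain rule $\log\prod_t\pi_t=\sum_t\log\pi_t$ to split the entropy term stagewise, and collapse the stages with the tower property of conditional expectation. You are in fact somewhat more complete than the paper, which only verifies that the recursive value equals the full objective evaluated at the product plan, whereas you additionally supply the feasibility check, the interchange of the stagewise minimizations with the reaching-probability-weighted sums, and the strict-convexity/uniqueness argument that identifies the recursive plan as \emph{the} minimizer.
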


\begin{proof}
	First define $\pi\coloneqq\prod_{t=1}^T\pi_t$, where~$\pi_t$ is the conditional transition probability, i.e., the solution at stage~$t$ and the matrices are multiplied element-wise (the Hadamard product) as in equation~\eqref{eq:dRecursiveSinkhorn} above. It follows that
	\begin{align}\label{eq:21}
		d^r\cdot\pi+\frac1\lambda\pi\log\pi & =d^r\cdot\prod_{t=1}^T\pi_t+\frac1\lambda\cdot \prod_{t=1}^T\pi_t\log\left(\prod_{t=1}^T\pi_{t}\right)\\
		& =d^r\cdot\prod_{t=1}^T\pi_t+\frac1\lambda\cdot \prod_{t=1}^T\pi_t \cdot \sum_{t=1}^T\log\pi_t.
	\end{align}
	Observe that $\pi_t(A)=\mathbb E(1_A\mid\mathcal F_t\otimes\tilde{\mathcal F}_t)$
	(cf.\ Lemma~\eqref{TowerProperty}). Denote the $r$\nobreakdash-distance of subtrees by~$\nde_T^r$. By linearity of the conditional expectation we have with~\eqref{eq:21}
	\[	\nde_{T-1}	=\E\left[\nde_T^r+\frac1\lambda\log\pi_T\Big{|}\, \mathcal F_{T-1}\otimes\tilde{\mathcal F}_{T-1}\right]^{\nicefrac1r}\]
	and from calculation in backward recursive way 
	\begin{align*}
		\MoveEqLeft[1] \nde_{T-2} =\E\left[\nde_{T-1}^r+\frac1\lambda\log\pi_{T-1}\Big{|}\, \mathcal F_{T-2}\otimes\tilde{\mathcal F}_{T-2}\right]^{\nicefrac1r}\\
		& =\E\left[\E\left[\nde_T^r +\frac1\lambda\log\pi_T\Big{|}\mathcal F_{T-1}\otimes\tilde{\mathcal F}_{T-1}\right] +\frac1\lambda\log\pi_{T-1}\Big{|}\, \mathcal F_{T-2}\otimes\tilde{\mathcal F}_{T-2}\right]^{\nicefrac1r}\\
		& =\E\left[\E\left[\nde_T^r +\frac1\lambda\log\pi_T +\frac1\lambda\log\pi_{T-1}\Big{|}\mathcal F_{T-1}\otimes\tilde{\mathcal F}_{T-1} \right]\Big{|}\mathcal F_{T-2}\otimes\tilde{\mathcal F}_{T-2}\right]^{\nicefrac1r}.
	\end{align*}
	Finally, it follows that
	\begin{align*}
		\MoveEqLeft[1]\nde_0 =\mathbb E\left[\nde_1^r+\frac1\lambda\log\pi_1\Big{|}\mathcal F_0\otimes\tilde{\mathcal F}_0\right]^{\nicefrac1r}\\
		& =\E\left[\E\left[\dots\E\left[\nde_T^r+\frac1\lambda\log\pi_T\Big{|}\mathcal F_{T-1}\otimes\tilde{\mathcal F}_{T-1}\right]\dots\Big{|}\mathcal F_1\otimes\tilde{\mathcal F}_1\right] +\frac1\lambda\log\pi_1\Big{|}\mathcal F_0\otimes\tilde{\mathcal F}_0\right]^{\nicefrac1r}\\
		& =\E\left[\E\left[\dots\E\left[\nde_T^r +\frac1\lambda\sum_{t=1}^T\log\pi_t\Big{|}\mathcal F_{T-1}\otimes\tilde{\mathcal F}_{T-1}\right]\dots\Big{|}\mathcal F_1\otimes\tilde{\mathcal F}_1\right]\Big{|}\mathcal F_0\otimes\tilde{\mathcal F}_0\right]^{\nicefrac1r}\\
		& =\E\left[\nde_T^r +\frac1\lambda\sum_{t=1}^T\log\pi_t\Big{|}\mathcal F_0\otimes\tilde{\mathcal F}_0\right]^{\nicefrac1r}\\
		& =\E\left[\nde_T^r +\frac1\lambda\sum_{t=1}^T\log\pi_t\right]^{\nicefrac1r},
	\end{align*}
	the assertion~\eqref{eq:SinkhornFull} of the theorem. 
\end{proof}
\begin{remark}
	The optimization problem in Theorem~\ref{thm:NestedSinkhorn} considers all constraints as the full nested problem~\eqref{eq:ndTree}, only the objective differs. For this reason the optimal solution of~\eqref{eq:SinkhornFull} is feasible for the problem~\eqref{eq:ndTree} and vice versa.

	Notice as well that the tower property can be used in a forward calculation. 
\end{remark}

Similarly to Proposition~\ref{prop:SinkhornInequality} we have the following extension to the nested Sinkhorn divergence.
\begin{corollary}\label{cor:Inequality} 
	For the nested distance and the nested Sinkhorn divergence, the same inequalities as in Proposition~\ref{prop:SinkhornInequality} apply, i.e.,
	\[
		0\le \boldsymbol d_S^r -\boldsymbol d_W ^r \le \frac1\lambda \left(H(\pi^S)-H(\pi^W)\right)
		\quad\text{and}\quad
		0\le \boldsymbol d_W^r-\boldsymbol{de}_S^r\le\frac1\lambda H(\pi^S)\le\frac1\lambda H(p\cdot p^\top),
	\]
	where $\pi^S$ ($\pi^W$, resp.)\ is the optimal transport plan from~\eqref{eq:SinkhornFull} (\eqref{eq:ndTree}, resp.)\ with discrete, unconditional probabilities~$p$ and~$\tilde p$ at the final stage~$T$. 
\end{corollary}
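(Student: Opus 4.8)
The plan is to reduce everything to Proposition~\ref{prop:SinkhornInequality} by exploiting Theorem~\ref{thm:NestedSinkhorn}. By that theorem, the nested Sinkhorn transport plan $\pi^S$ is precisely the minimizer of problem~\eqref{eq:SinkhornFull}, whose feasible set is \emph{identical} to the feasible set of the nested problem~\eqref{eq:ndTree} (as the remark following the theorem records); the two problems differ only in that~\eqref{eq:SinkhornFull} adds the term $\frac1\lambda\sum_{i,j}\pi_{ij}\log\pi_{ij}$ to the objective. Hence, writing $\mathcal C$ for this common feasible set of probability measures on $\mathcal N_T\times\tilde{\mathcal N}_T$ with the prescribed conditional (and unconditional) marginals, we are in exactly the situation of the flat case: one problem minimizes $\sum\pi_{ij}d_{ij}^r$ over $\mathcal C$, the other minimizes $\sum\pi_{ij}d_{ij}^r+\frac1\lambda H$-penalized objective over the same $\mathcal C$.

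First I would establish the analogue of~\eqref{eq:16}, namely $\boldsymbol{de}_S^r\le \boldsymbol d_W^r\le \boldsymbol d_S^r$. For the left inequality, note every $\pi\in\mathcal C$ has all entries in $(0,1]$, so $\pi_{ij}\log\pi_{ij}\le0$ and the regularized objective lies pointwise below $\sum\pi_{ij}d_{ij}^r$ on $\mathcal C$; minimizing preserves the inequality. For the right inequality, $\pi^S\in\mathcal C$ is feasible for~\eqref{eq:ndTree}, so $\boldsymbol d_S^r=\sum\pi^S_{ij}d_{ij}^r\ge \boldsymbol d_W^r$. Then, copying the proof of Proposition~\ref{prop:SinkhornInequality} line for line: optimality of $\pi^S$ in~\eqref{eq:SinkhornFull} against the competitor $\pi^W\in\mathcal C$ gives $\boldsymbol d_S^r-\frac1\lambda H(\pi^S)\le \boldsymbol d_W^r-\frac1\lambda H(\pi^W)$, which rearranges to $\boldsymbol d_S^r-\boldsymbol d_W^r\le\frac1\lambda(H(\pi^S)-H(\pi^W))$; together with $\boldsymbol d_S^r\ge \boldsymbol d_W^r$ this is the first chain. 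For the second chain, use $\boldsymbol d_W^r-\boldsymbol{de}_S^r=(\boldsymbol d_W^r-\boldsymbol d_S^r)+\frac1\lambda H(\pi^S)\le\frac1\lambda H(\pi^S)$ from the right inequality above, the lower bound $0$ from the left inequality above, and finally the log-sum inequality $H(\pi^S)\le H(p\cdot\tilde p^\top)$ — valid because, by the unconditional constraint $\sum_{i,j}\pi_{ij}=1$ together with the conditional marginal constraints composed down the tree, $\pi^S$ has final-stage marginals $p$ and $\tilde p$.

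The one point requiring a little care — and the only real obstacle — is justifying that the feasible set of~\eqref{eq:SinkhornFull} genuinely coincides with that of~\eqref{eq:ndTree} and that the recursively built $\pi^S$ of~\eqref{eq:dRecursiveSinkhorn} actually has the claimed final-stage marginals $p,\tilde p$; both are delivered by Theorem~\ref{thm:NestedSinkhorn} and the remark immediately after it (the tower property collapses the telescoped conditional constraints into the marginal ones), so once those are invoked the argument is a verbatim transcription of the proof of Proposition~\ref{prop:SinkhornInequality}, with $d_W^r$, $d_S^r$, $de_S^r$ replaced by their boldface nested counterparts. (I would also flag that $H(p\cdot p^\top)$ in the statement should read $H(p\cdot\tilde p^\top)$.)
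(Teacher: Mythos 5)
Your proof is correct and is essentially the argument the paper intends: the paper's own proof of this corollary consists of the single remark that it ``follows the lines of the proofs of Propositions~\ref{prop:Inequality1} and~\ref{prop:SinkhornInequality},'' and your write-up is exactly that transcription, with the (appropriate) extra care of invoking Theorem~\ref{thm:NestedSinkhorn} and its following remark to identify the common feasible set before replaying the flat-case optimality comparison. Your observation that $H(p\cdot p^\top)$ in the statement should read $H(p\cdot\tilde p^\top)$ is also correct.
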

\begin{proof}
	The proof follows the lines of the proof of the Propositions~\ref{prop:Inequality1} and~\ref{prop:SinkhornInequality}.
\end{proof}

Moreover, we have the following general inequality that allows an error bound depending on the total~$T$ of stages.

\begin{corollary}
	Let $m$ ($\tilde m$, resp.)\ be the maximum number of immediate successors in the process~$\mathbb{P}$ ($\tilde{\mathbb{P}}$, resp.), i.e., $m=\max\left\{ |i+|\colon i\in\mathcal N_t,\ t=1,\dots,T-1\right\} $.
	It holds that
	\begin{equation}\label{eq:28}
		\boldsymbol{de}_S^r-\boldsymbol d_W^r\leq\frac{\log m+\log\tilde m}\lambda\cdot T,
	\end{equation}
	where~$T$ is the total number of stages.
\end{corollary}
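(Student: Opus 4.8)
The plan is to combine Corollary~\ref{cor:Inequality} with the per-stage entropy bound from the log-sum inequality, accumulated across all $T$ stages. The point is that the full transport plan $\pi$ factorizes as a Hadamard product $\pi=\prod_{t=1}^{T}\pi_t$ of the conditional plans (equation~\eqref{dRecurSum}), so the global entropy $H(\pi)$ should decompose into a sum of conditional entropies, one per stage, each of which can be bounded independently.

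First I would invoke Corollary~\ref{cor:Inequality}, which already gives $\boldsymbol{de}_S^r-\boldsymbol d_W^r\le \tfrac1\lambda H(\pi^S)$. So it suffices to show $H(\pi^S)\le (\log m+\log\tilde m)\,T$. Next I would use the factorization $\pi_{ij}=\prod_{t=0}^{T-1}\pi_{t}(i_{t+1},j_{t+1}\mid i_t,j_t)$ from~\eqref{eq:dRecursiveSinkhorn} to write $\log\pi_{ij}=\sum_{t=0}^{T-1}\log\pi_t(\cdot\mid i_t,j_t)$, hence
\[
	H(\pi)=-\sum_{i,j}\pi_{ij}\log\pi_{ij}=\sum_{t=0}^{T-1}\Bigl(-\sum_{i,j}\pi_{ij}\log\pi_t(i_{t+1},j_{t+1}\mid i_t,j_t)\Bigr).
\]
For each fixed $t$, group the leaf sum over the nodes $(i_t,j_t)\in\mathcal N_t\times\tilde{\mathcal N}_t$ and their immediate successors: marginalizing the leaf weights down to stage $t+1$ turns the $t$-th summand into $\sum_{(i_t,j_t)} P\otimes\tilde P(i_t,j_t)\cdot H\bigl(\pi_t(\cdot\mid i_t,j_t)\bigr)$, a probability-weighted average of conditional entropies. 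Here I would use the tower property (Lemma~\ref{TowerProperty}) to justify that marginalizing the leaf measure $\pi$ onto $\mathcal F_{t+1}\otimes\tilde{\mathcal F}_{t+1}$ gives exactly the stage-$(t+1)$ transition measure, so the bookkeeping is clean.

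Finally, each conditional plan $\pi_t(\cdot\mid i_t,j_t)$ is a probability measure on the product $(i_t+)\times(j_t+)$, which has at most $m\cdot\tilde m$ atoms; by Remark~\ref{rem:3} (the log-sum inequality bound $H(\cdot)\le\log(\text{number of rows})+\log(\text{number of columns})$), its entropy is at most $\log m+\log\tilde m$. Since the $(i_t,j_t)$-weights sum to one, the $t$-th summand is $\le\log m+\log\tilde m$, and summing the $T$ stages $t=0,\dots,T-1$ gives $H(\pi^S)\le(\log m+\log\tilde m)\,T$, which combined with Corollary~\ref{cor:Inequality} yields~\eqref{eq:28}.

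The main obstacle I anticipate is the second step: carefully establishing the entropy chain rule $H(\pi)=\sum_t \mathbb E_{P\otimes\tilde P}\bigl[H(\pi_t(\cdot\mid\mathcal F_t\otimes\tilde{\mathcal F}_t))\bigr]$ in the tree notation, i.e.\ checking that when one re-sums the leaf weights $\pi_{ij}$ against $\log\pi_t$ the cross terms collapse correctly and the marginalization produces the genuine conditional distributions rather than something only proportional to them. Once that identity is in place the rest is a one-line application of the already-proved bounds.
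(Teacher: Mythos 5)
Your argument is correct, and it reaches the bound by a genuinely different route than the paper. The paper's proof is a one-liner at the level of the \emph{global} leaf-stage plan: it applies the maximum-entropy bound of Remark~\ref{rem:3} once to the full matrix $\pi^S$, giving $H(\pi^S)\le\log n+\log\tilde n$ with $n,\tilde n$ the total numbers of leaves, and then simply counts leaves, $n\le m^T$, so that $\log n\le T\log m$. You instead decompose the entropy stage by stage: using the factorization $\pi_{ij}=\prod_t\pi_t(\,\cdot\mid i_t,j_t)$ from~\eqref{eq:dRecursiveSinkhorn} you establish the chain rule $H(\pi)=\sum_{t}\sum_{(i_t,j_t)}w_t(i_t,j_t)\,H\bigl(\pi_t(\cdot\mid i_t,j_t)\bigr)$ and bound each conditional entropy by $\log(m\tilde m)$, since each conditional plan is supported on at most $m\cdot\tilde m$ atoms. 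Both routes yield exactly the constant $T(\log m+\log\tilde m)$; yours costs the extra bookkeeping of verifying the chain rule (which you correctly identify as the only delicate step, and which does go through as you describe), while the paper's avoids it entirely at the price of being less structural. Two small remarks: the weights $w_t(i_t,j_t)$ appearing in your decomposition are the stage-$t$ marginals of the transport plan $\pi$, not of the product measure $P\otimes\tilde P$ as you write --- harmless here, since all you use is that they sum to one; and the inequality you quote from Corollary~\ref{cor:Inequality} is stated there as $0\le\boldsymbol d_W^r-\boldsymbol{de}_S^r\le\frac1\lambda H(\pi^S)$, so the quantity $\boldsymbol{de}_S^r-\boldsymbol d_W^r$ in~\eqref{eq:28} is in fact nonpositive and the bound is on the magnitude of the gap; this sign quirk is already present in the paper's statement and is not introduced by your argument.
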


\begin{proof}
	Recall from Remark~\ref{rem:3} that $H(\pi^S)\leq\log(n\,\tilde n)= \log n+\log\tilde n$ for every conditional probability measures, where~$n$ and~$\tilde n$ are the number of immediate successors in both trees. The result follows with $n\le m^T$ ($\tilde n\le \tilde m^T$, resp.)\ and $\log n\le T\log m$ and the nested program~\eqref{program:SinkhornRecursive}.
\end{proof}

\subsection{Nested Sinkhorn duality}
The nested distance is of importance in stochastic optimization because of its dual, which is characterized by the Kantorovich--Rubinstein theorem, cf.~\eqref{eq:WassersteinDual}--\eqref{eq:WassersteinDualb} above. The nested distance allows for a characterization by duality as well. Here we develop the duality for the nested Sinkhorn divergence. In line with Theorem~\ref{thm:NestedSinkhorn} we need to consider the problem
\begin{subequations}
	\begin{align}\label{ndRegularized}
		\text{minimize}_{\text{ in }\pi} & \left(\iint \left( d(\xi,\tilde\xi)^r+\frac1\lambda\log\pi(\xi, \tilde \xi)\right) 
		\pi(\mathrm d\xi,\mathrm d\tilde\xi)\right)^{\nicefrac1r}\\
		\text{subject to } & \pi(A\times\tilde\Xi\mid \mathcal F_t\otimes\tilde{\mathcal F}_t)=P(A\mid \mathcal F_t), \qquad A\in\mathcal F_t,\ t=1,\dots,T, \label{ndRegularizedb}\\
		& \pi(\Xi\times B\mid \mathcal F_t\otimes\tilde{\mathcal F}_t)=\tilde P(B\mid \tilde{\mathcal F}_t), \qquad B\in\tilde{\mathcal F}_t,\ t=1,\dots,T. \label{ndRegularizedc}
	\end{align}		
\end{subequations}
However, we first reformulate the problem~\eqref{eq:14}--\eqref{eq:14b}.
By translating the dual variables,
$\hat\beta\coloneqq -\beta+\E \beta$ 
and $\hat\gamma\coloneqq -\gamma+\tilde{\E}\gamma$,
and defining $M_0\coloneqq-\E\beta-\tilde\E\gamma$
we have the alternative representation 
\begin{align}\label{M0Sinkhorn}
	\text{maximize}_{\text{ in }M_0\ } & M_0\\
	\text{subject to } & \E\hat\beta=0,\ \tilde\E\hat\gamma=0,\\
	& \sum_{\xi,\tilde\xi}\exp\left(-\lambda\left(d(\xi,\tilde\xi)^r-\hat\beta(\xi)-\hat{\gamma}(\tilde\xi)-M_0\right)-1\right)=1,\\
	& \hat\beta\in\mathbb R^n,\ \hat\gamma\in\mathbb R^{\tilde n}.
\end{align}
To establish the dual representation of the nested distance we introduce the projections 
\begin{align*}
	\proj_t\colon L^1(\mathcal F_T\otimes\tilde{\mathcal F}_T) & \to L^1(\mathcal F_t\otimes\tilde{\mathcal F}_T)\\
	\hat{\beta}(\xi)\cdot\hat{\gamma}(\tilde\xi) & \mapsto\E(\hat{\beta}\mid \mathcal F_t)(\xi)\cdot\hat{\gamma}(\tilde\xi)
\end{align*}
and 
\begin{align*}
	\tilde{\proj}_t\colon L^1(\mathcal F_T\otimes\tilde{\mathcal F}_T) & \to L^1(\mathcal F_T\otimes\tilde{\mathcal F}_t)\\
	\hat\beta(\xi)\cdot\hat{\gamma}(\tilde\xi) & \mapsto\hat{\beta}(\xi)\cdot\E(\hat{\gamma}\mid \tilde{\mathcal F}_t)(\tilde\xi).
\end{align*}

We recall the following characterization of the measurability constraints~\eqref{ndRegularizedb}--\eqref{ndRegularizedc} and refer to \citep[Proposition~2.48]{PflugPichlerBuch} for its proof.
\begin{prop}\label{Prop:CharacProj}
	The measure $\pi$ satisfies the marginal condition 
	\[
		\pi(A\times\tilde\Xi\mid \mathcal F_t\otimes\tilde{\mathcal F}_t)=P(A\mid \mathcal F_t)\quad\text{for all }A\in\Xi
	\]
	if and only if 
	\[	\E_{\pi}\beta=\E_{\pi}\proj_t\beta
		\quad\emph{for all}\quad \beta\lhd\mathcal F_T\otimes\tilde{\mathcal F}_T.\]
	Moreover, $\proj_t(\beta)=\E_{\pi}(\beta\mid \mathcal F_t\otimes\tilde{\mathcal F}_T)$
	if $\pi$ has marginal $P$. 
\end{prop}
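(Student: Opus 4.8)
The plan is to push everything down to \emph{elementary tensors} $\beta(\xi,\tilde\xi)=\hat\beta(\xi)\,\hat\gamma(\tilde\xi)$ --- equivalently, to indicators $\one_{A\times B}$ of measurable rectangles with $A\in\mathcal F_T$ and $B\in\tilde{\mathcal F}_T$ --- and then to read off the claimed identities from the defining property of the conditional expectation together with the ``take out what is known'' rule. The only genuinely technical input is the legitimacy of this reduction: finite linear combinations of elementary tensors are $L^1$-dense in $L^1(\mathcal F_T\otimes\tilde{\mathcal F}_T)$, and $\proj_t$ extends from its action on tensors to an $L^1$-contraction on that space (it is nothing but $\E_{P\otimes\tilde P}(\cdot\mid\mathcal F_t\otimes\tilde{\mathcal F}_T)$). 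Since the two sides of the asserted equivalence are linear and $L^1$-continuous in~$\beta$ --- and, in the variable~$A$, the marginal identity propagates from a generating $\pi$-system by a $\lambda$-system argument --- it suffices to test on the generating rectangles; this bookkeeping with the $L^1$ tensor product is the step I expect to be the main, if routine, obstacle.

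For $\beta=\one_{A\times B}$ one has $\E_\pi\beta=\pi(A\times B)$, and, by the definition of $\proj_t$ on the tensor $\one_A(\xi)\,\one_B(\tilde\xi)$,
\[
   \proj_t\beta = P(A\mid\mathcal F_t)(\xi)\cdot\one_B(\tilde\xi),
   \qquad\text{hence}\qquad
   \E_\pi\proj_t\beta=\int_{\Xi\times B}P(A\mid\mathcal F_t)(\xi)\,\pi(\mathrm d\xi,\mathrm d\tilde\xi).
\]
In the direction ``marginal condition $\Rightarrow$ $\E_\pi\beta=\E_\pi\proj_t\beta$'' I would replace $P(A\mid\mathcal F_t)$ by $\pi\!\left(A\times\tilde\Xi\mid\mathcal F_t\otimes\tilde{\mathcal F}_T\right)$ in the last integral; since $\one_B(\tilde\xi)$ is $\mathcal F_t\otimes\tilde{\mathcal F}_T$-measurable, the tower property turns that integral into $\E_\pi\big[\E_\pi(\one_{A\times\tilde\Xi}\,\one_{\Xi\times B}\mid\mathcal F_t\otimes\tilde{\mathcal F}_T)\big]=\pi(A\times B)=\E_\pi\beta$. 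For the converse I would use the defining property of the conditional expectation: $P(A\mid\mathcal F_t)(\xi)$ is $\mathcal F_t\otimes\tilde{\mathcal F}_T$-measurable, hence equals $\pi(A\times\tilde\Xi\mid\mathcal F_t\otimes\tilde{\mathcal F}_T)$ once $\int_{A'\times B'}P(A\mid\mathcal F_t)(\xi)\,\mathrm d\pi=\pi\big((A\cap A')\times B'\big)$ for all $A'\in\mathcal F_t$ and $B'\in\tilde{\mathcal F}_T$; using $A'\in\mathcal F_t$ and ``take out what is known'' under~$P$, the left-hand side equals $\E_\pi\big[\E_P(\one_{A\cap A'}\mid\mathcal F_t)(\xi)\,\one_{B'}(\tilde\xi)\big]=\E_\pi\proj_t\big(\one_{(A\cap A')\times B'}\big)$, which by hypothesis is $\E_\pi\one_{(A\cap A')\times B'}=\pi\big((A\cap A')\times B'\big)$, as needed.

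The ``moreover'' then falls out of the same computation. On elementary tensors, hence --- by linearity and $L^1$-density, exactly as in the first paragraph --- on every integrable $\beta$, the function $\proj_t\beta$ is $\mathcal F_t\otimes\tilde{\mathcal F}_T$-measurable and satisfies $\int_D\proj_t\beta\,\mathrm d\pi=\int_D\beta\,\mathrm d\pi$ for every $D\in\mathcal F_t\otimes\tilde{\mathcal F}_T$ (for $D=A'\times B'$ this is precisely the displayed rectangle identity, read under the marginal condition). The two defining properties of the conditional expectation then give $\proj_t\beta=\E_\pi(\beta\mid\mathcal F_t\otimes\tilde{\mathcal F}_T)$; the hypothesis that $\pi$ has first marginal~$P$ is used only to make $P(A\mid\mathcal F_t)$ well defined $\pi$-almost surely and to identify $\E_P(\cdot\mid\mathcal F_t)$ with conditioning under the first marginal of~$\pi$. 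Note finally that, once this identification is in force, $\E_\pi\beta=\E_\pi\big[\E_\pi(\beta\mid\mathcal F_t\otimes\tilde{\mathcal F}_T)\big]=\E_\pi\proj_t\beta$ is automatic, so the ``moreover'' in fact subsumes the forward implication of the equivalence.
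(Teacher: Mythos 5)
The paper offers no proof of this proposition at all---it defers to \citep[Proposition~2.48]{PflugPichlerBuch}---so your argument must stand on its own. Your reduction to rectangles and your converse direction are sound, but the forward direction contains a genuine gap. The hypothesis gives $P(A\mid\mathcal F_t)=\pi\big(A\times\tilde\Xi\mid\mathcal F_t\otimes\tilde{\mathcal F}_t\big)$, a conditional probability with respect to $\mathcal F_t\otimes\tilde{\mathcal F}_t$, yet at the step ``I would replace $P(A\mid\mathcal F_t)$ by $\pi(A\times\tilde\Xi\mid\mathcal F_t\otimes\tilde{\mathcal F}_T)$'' you silently upgrade the conditioning $\sigma$-algebra to $\mathcal F_t\otimes\tilde{\mathcal F}_T$. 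These two conditional probabilities differ in general, and your tower-property computation needs the larger one because the test set $\Xi\times B$ has $B\in\tilde{\mathcal F}_T$. The same unjustified upgrade reappears in the ``moreover'' part, where you claim $\int_D\proj_t\beta\,\mathrm d\pi=\int_D\beta\,\mathrm d\pi$ for all $D\in\mathcal F_t\otimes\tilde{\mathcal F}_T$ ``read under the marginal condition''.

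This is not a repairable oversight within the statement as printed: your own rectangle computation shows that $\E_\pi\beta=\E_\pi\proj_t\beta$ for \emph{all} $\beta\lhd\mathcal F_T\otimes\tilde{\mathcal F}_T$ is equivalent to $\pi(A\times\tilde\Xi\mid\mathcal F_t\otimes\tilde{\mathcal F}_T)=P(A\mid\mathcal F_t)$, which is strictly stronger than the stated marginal condition. Concretely, take two trees with a single node at stage~$1$ and two equiprobable leaves at stage~$2$, and let $\pi$ be the diagonal coupling: every marginal condition of Definition~\ref{ndDefinition} holds, yet for $\beta=\one_{\{i\}}(\xi)\,\one_{\{j\}}(\tilde\xi)$ with $i,j$ matching leaves one gets $\E_\pi\beta=\nicefrac12$ while $\E_\pi\proj_1\beta=\nicefrac14$, and likewise $\proj_1\beta\neq\E_\pi(\beta\mid\mathcal F_1\otimes\tilde{\mathcal F}_2)$. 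The equivalence and the ``moreover'' become true once $\beta$ is restricted to be $\mathcal F_T\otimes\tilde{\mathcal F}_t$-measurable---which is the only form in which the paper later uses the result (in Theorem~\ref{ND_MSink}, $\proj_s$ is applied to $f_{s+1}\lhd\mathcal F_{s+1}\otimes\tilde{\mathcal F}_s$). With that restriction your argument goes through verbatim: the rectangles then have $B\in\tilde{\mathcal F}_t$, so $\one_B$ is $\mathcal F_t\otimes\tilde{\mathcal F}_t$-measurable and the tower property can be applied with the $\sigma$-algebra that the hypothesis actually controls. You should state and prove that corrected version rather than the literal one.
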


\begin{theorem}\label{ND_MSink}
	The infimum or the nested distance including the entropy $\nde^r(\mathbb{P},\tilde{\mathbb P})$ of problem~\eqref{eq:SinkhornFull} equals the supremum of all numbers $M_0$ such that
	\[
		e^{-\lambda(d(\xi,\tilde\xi)^r-M_T(\xi,\tilde\xi))-1}\in\mathcal P(\Xi\times\tilde\Xi), \qquad(\xi, \tilde\xi)\in \Xi\times\tilde\Xi,
	\]
	where $\mathcal P(\Xi\times\tilde\Xi)$ is a set of probability measures on $(\Xi\times\tilde\Xi)$ and $M_t$ is an $\mathbb R$-valued process on $\Xi\times\tilde\Xi$ of the form
	\begin{equation}\label{eq:Mt}
		M_t=M_0+\sum_{s=1}^t\hat\beta_s+\hat\gamma_s
	\end{equation}
	and the measurable functions $\hat{\beta}_t\lhd\mathcal F_t\otimes\mathcal{\tilde F}_{t-1}$
	and $\hat{\gamma}_t\lhd\mathcal F_{t-1}\otimes\mathcal{\tilde F}_t$
	satisfy $\proj_{t-1}(\hat{\beta}_t)=0$ and $\tilde{\proj}_{t-1}(\hat{\gamma}_t)=0$. 
\end{theorem}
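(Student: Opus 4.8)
The plan is to derive the dual of the entropy-regularized nested problem \eqref{eq:SinkhornFull} stage by stage, mirroring the backward recursion that Theorem~\ref{thm:NestedSinkhorn} uses, and then to reassemble the stagewise dual variables into the single process $M_t$ of the form \eqref{eq:Mt}. First I would recall from the single-stage analysis that the dual of the conditional Sinkhorn problem \eqref{program:SinkhornRecursive} is, after the change of variables already carried out in \eqref{M0Sinkhorn}, the maximization of a local constant $M_0^{(i_t,j_t)}$ subject to the normalization $\sum \exp(-\lambda(\nde^{(t+1)}-\hat\beta-\hat\gamma-M_0^{(i_t,j_t)})-1)=1$ together with the centering conditions $\E\hat\beta=0$, $\tilde\E\hat\gamma=0$; by Proposition~\ref{Prop:CharacProj} these centering conditions are exactly the abstract marginal constraints restated as $\proj_{t-1}(\hat\beta_t)=0$ and $\tilde{\proj}_{t-1}(\hat\gamma_t)=0$. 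So the essential content is: (a) the optimal value $\nde^{(t)}$ produced by one step of the recursion equals the constant $M_0$ of that step, and (b) the local dual increments $\hat\beta_t,\hat\gamma_t$ glue into a global process.

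For step (a), I would take the representation \eqref{dRecurSum} for the optimal plan, $\pi_{ij}=\prod_{t=0}^{T-1}\tilde\beta_{i_tj_t}\,e^{-\lambda\,\nde^{(t+1)}_{i_tj_t}}\,\tilde\gamma_{j_ti_t}$, and take logarithms to obtain $-\tfrac1\lambda\log\pi_{ij}=\sum_{t=0}^{T-1}\nde^{(t+1)}_{i_tj_t}-\beta_{i_tj_t}-\gamma_{j_ti_t}+\tfrac{T}\lambda$. Comparing with the scalar identity $d^r_{ij}=\sum_{t}\,(\text{incremental costs})$ and using that, by construction of the recursion, $\nde^{(t)}(i_t,j_t)$ is the conditional expectation of $\nde^{(t+1)}+\tfrac1\lambda\log\pi_{t+1}$ given $\mathcal F_t\otimes\tilde{\mathcal F}_t$ (this is precisely what the proof of Theorem~\ref{thm:NestedSinkhorn} establishes), one sees that the telescoping sum of the local constants $M_0^{(i_t,j_t)}$ is exactly $M_T=M_0+\sum_{s=1}^T\hat\beta_s+\hat\gamma_s$, and that $e^{-\lambda(d^r-M_T)-1}=\pi$, which is the claimed probability measure. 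Strong duality at each stage — the fact that no duality gap opens and the optimal value of the primal conditional problem equals $M_0^{(i_t,j_t)}$ — follows from strict convexity of the entropy-regularized objective and Slater's condition (the strictly positive $\pi$ of \eqref{eq:Sinkhornb} is feasible), exactly as in the unconditional derivation preceding \eqref{eq:14}.

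For step (b), the measurability bookkeeping is the heart of the matter: the local multiplier $\beta_{i_tj_t}$ associated with the marginal constraint at $(i_t,j_t)$ depends on $i_t\in\mathcal N_t$ but is ``one stage behind'' in the second coordinate (it is constant across the children $j'\in j_t+$ but resolves $i'\in i_t+$), which is why the assembled increment $\hat\beta_t$ is $\mathcal F_t\otimes\tilde{\mathcal F}_{t-1}$-measurable and $\hat\gamma_t$ is $\mathcal F_{t-1}\otimes\tilde{\mathcal F}_t$-measurable; the centering $\E\hat\beta=0$, $\tilde\E\hat\gamma=0$ at each node becomes $\proj_{t-1}(\hat\beta_t)=0$, $\tilde{\proj}_{t-1}(\hat\gamma_t)=0$ globally, via Proposition~\ref{Prop:CharacProj}. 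I would then verify the reverse inequality: given \emph{any} process $M_t$ of the form \eqref{eq:Mt} with $e^{-\lambda(d^r-M_T)-1}\in\mathcal P(\Xi\times\tilde\Xi)$, the measure $\tilde\pi\coloneqq e^{-\lambda(d^r-M_T)-1}$ is feasible for the primal \eqref{eq:SinkhornFull} (the projection conditions force the conditional marginals to be $P(\cdot\mid\mathcal F_t)$ and $\tilde P(\cdot\mid\tilde{\mathcal F}_t)$, again by Proposition~\ref{Prop:CharacProj}), and a direct computation of the primal objective at $\tilde\pi$ — substituting $-\tfrac1\lambda\log\tilde\pi=d^r-M_T+\tfrac1\lambda$ and using $\E_{\tilde\pi}\hat\beta_t=\E_{\tilde\pi}\proj_{t-1}\hat\beta_t=0$ and similarly for $\hat\gamma_t$ — collapses everything to $M_0$. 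This sandwiches the two values and gives equality.

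The main obstacle I expect is not the convex-analytic part but the measurability telescoping in step (b): one must be careful that the pointwise (node-by-node) Lagrange multipliers produced by the recursion really do assemble into functions with the stated filtration-adaptedness $\hat\beta_t\lhd\mathcal F_t\otimes\tilde{\mathcal F}_{t-1}$, $\hat\gamma_t\lhd\mathcal F_{t-1}\otimes\tilde{\mathcal F}_t$, and that the sum $\sum_{s\le t}\hat\beta_s+\hat\gamma_s$ reconstructs $M_t$ consistently along every path without over- or under-counting the normalization constants $-1/\lambda$ from each stage; getting the indexing of $\proj_{t-1}$ versus $\proj_t$ right, and checking that the ``$+\E\beta$'' recentering at each node is compatible with the global centering, is where the real work lies. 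I would handle this by an explicit induction on $T$, treating $T=1$ as the base case (which is exactly \eqref{M0Sinkhorn}) and peeling off the last stage using the tower property as in the proof of Theorem~\ref{thm:NestedSinkhorn}.
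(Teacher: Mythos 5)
Your route is genuinely different from the paper's: the paper proves the theorem in one global step, writing the constrained problem as an inf--sup of a Lagrangian whose multipliers are encoded via Proposition~\ref{Prop:CharacProj}, invoking Sion's minimax theorem to exchange $\inf_{\pi}$ and $\sup_{M_0,f_t,g_t}$, and then solving the inner minimization in $\pi$ explicitly; the substitution $\hat\beta_s=f_s-\proj_{s-1}(f_s)$, $\hat\gamma_s=g_s-\tilde\proj_{s-1}(g_s)$ produces the process $M_t$ directly, so both inequalities come for free from the minimax identity. Your plan instead dualizes each conditional problem~\eqref{program:SinkhornRecursive} separately and glues the nodewise multipliers. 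Steps (a) and the measurability bookkeeping in (b) are plausible and your worry about the accumulated constants ($e^{-T}$ in~\eqref{eq:dRecursiveSinkhorn} versus the single $e^{-1}$ in the theorem) is well placed, but that part could be made to work by induction.

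The genuine gap is in your ``reverse inequality.'' You claim that for \emph{any} admissible process $M_t$ the measure $\tilde\pi\coloneqq e^{-\lambda(d^r-M_T)-1}$ is feasible for the primal~\eqref{eq:SinkhornFull} because the conditions $\proj_{t-1}(\hat\beta_t)=0$, $\tilde\proj_{t-1}(\hat\gamma_t)=0$ force the marginals. This is false: those are constraints on the dual variables, not on $\tilde\pi$. Already for $T=1$ with $\hat\beta\equiv0$, $\hat\gamma\equiv0$ and $M_0$ fixed by normalization, $\tilde\pi_{ij}=e^{-\lambda(d_{ij}-M_0)-1}$ has marginals $\sum_j e^{-\lambda d_{ij}+\lambda M_0-1}$, which are not $p_i$ in general; and your subsequent use of $\E_{\tilde\pi}\hat\beta_t=\E_{\tilde\pi}\proj_{t-1}\hat\beta_t$ invokes Proposition~\ref{Prop:CharacProj}, which presupposes the very marginal property you are trying to establish. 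Moreover, even if $\tilde\pi$ were feasible with objective value $M_0$, that would only give $\inf_{\text{primal}}\le M_0$ for each feasible $M_0$, which is the wrong direction for the sandwich: what you need is $M_0\le$ the primal value at \emph{every} primal-feasible $\pi$. The correct weak-duality argument takes a primal-feasible $\pi$, applies Gibbs' inequality $\E_\pi\log\pi\ge\E_\pi\log\rho$ with $\rho=e^{-\lambda(d^r-M_T)-1}$ to get $\E_\pi\bigl[d^r+\tfrac1\lambda\log\pi\bigr]\ge\E_\pi M_T-\tfrac1\lambda$, and then uses Proposition~\ref{Prop:CharacProj} for the feasible $\pi$ (not for $\tilde\pi$) together with $\proj_{t-1}(\hat\beta_t)=0$, $\tilde\proj_{t-1}(\hat\gamma_t)=0$ to conclude $\E_\pi M_T=M_0$. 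Replacing your argument by this one (and tracking the additive constant) closes the gap.
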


\begin{proof}
	With Proposition~\ref{Prop:CharacProj} rewrite the dual problem as 
	\begin{align*}
		\MoveEqLeft[4]\inf_{\pi>0}\sup_{M_0,f_t,g_t}\E_{\pi}\left[d^r+\frac1\lambda\log\pi\right]  +M_0\cdot(1-\E_{\pi}\one)+\\
		& -\sum_{s=0}^{T-1}\big(\E_{\pi}f_{s+1}-\E_{\pi}\proj_s(f_{s+1})\big)
		-\sum_{s=0}^{T-1}\big(\E_{\pi}g_{s+1}-\E_{\pi}\tilde{\proj}_s(g_{s+1})\big),
	\end{align*}
	where the second line encodes the measurability constraints.
	By the minmax theorem (cf.\ \citet{Sion}) this is equivalent to 
	\begin{align*}
		\sup_{M_0,f_t,g_t}M_0+\inf_{\pi>0}\E_{\pi}\Big[ & d^r+\frac1\lambda\log\pi-M_0\cdot\one\\
		& -\sum_{s=0}^{T-1}(f_{s+1}-\proj_s(f_{s+1}))
		 -\sum_{s=0}^{T-1}(g_{s+1}-\tilde{\proj}_s(g_{s+1}))\Big].
	\end{align*}
	The integral exists and the minimum is obtained by a probability measure 
	\[
		\pi=\exp\left(-\lambda\left(d^r-\sum_{s=0}^{T-1}(f_{s+1}-\proj_s(f_{s+1}))-\sum_{s=0}^{T-1}(g_{s+1}-\tilde{\proj}_s(g_{s+1})-M_0\right)-1\right).
	\]
	Set $\hat\beta_s\coloneqq f_s-\proj_{s-1}(f_s)$ and
	$\hat\gamma_s\coloneqq g_s-\tilde{\proj}_{s-1}(g_s)$.
	Consequently, the problem reads
	\begin{align*}
		\text{maximize}_{\text{ in }M_0\ } & M_0\\
		\text{subject to } & \exp\left[-\lambda\left(d^r-\sum_{s=1}^T\hat{\beta}_s-\sum_{s=1}^T\hat{\gamma}_s-M_0\right)-1\right]\in\mathcal P(\Xi\times\tilde\Xi)\\
		& \proj_{t-1}(\hat\beta_t)=0,\tilde{\proj}_{t-1}(\hat\gamma_t)=0,
	\end{align*}
	and thus the assertion. 
\end{proof}

The following corollary links the optimal probability measure and the stochastic process~\eqref{eq:Mt} for the optimal components $\hat\beta$ and $\hat\gamma$. 
\begin{corollary}
	The process $M_t$ in~\eqref{eq:Mt}, for which the supremum is attained, is a martingale with respect to the optimal measure~$\pi$.
\end{corollary}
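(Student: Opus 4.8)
The plan is to prove the martingale property in its one-step form with respect to the filtration $(\mathcal F_t\otimes\tilde{\mathcal F}_t)_t$: since by~\eqref{eq:Mt} the increment is $M_t-M_{t-1}=\hat\beta_t+\hat\gamma_t$, and since $\hat\beta_s$ is $(\mathcal F_s\otimes\tilde{\mathcal F}_{s-1})$-measurable and $\hat\gamma_s$ is $(\mathcal F_{s-1}\otimes\tilde{\mathcal F}_s)$-measurable, hence both $(\mathcal F_t\otimes\tilde{\mathcal F}_t)$-measurable for every $s\le t$, the process $M_t$ is adapted (integrability being automatic on the finite trees). Thus the whole statement reduces to the single identity
\[
	\E_\pi\big[\hat\beta_t+\hat\gamma_t\mid\mathcal F_{t-1}\otimes\tilde{\mathcal F}_{t-1}\big]=0,\qquad t=1,\dots,T.
\]

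First I would dispatch the $\hat\beta_t$ term. The optimal $\pi$ is feasible for~\eqref{eq:SinkhornFull}, so it has first marginal $P$ and satisfies the conditional marginal constraints; Proposition~\ref{Prop:CharacProj}, applied to $\hat\beta_t$ (which is $(\mathcal F_t\otimes\tilde{\mathcal F}_{t-1})$-measurable, hence $(\mathcal F_T\otimes\tilde{\mathcal F}_T)$-measurable), then gives $\proj_{t-1}(\hat\beta_t)=\E_\pi\big(\hat\beta_t\mid\mathcal F_{t-1}\otimes\tilde{\mathcal F}_T\big)$. By the hypothesis of Theorem~\ref{ND_MSink} the left-hand side is zero, hence $\E_\pi\big(\hat\beta_t\mid\mathcal F_{t-1}\otimes\tilde{\mathcal F}_T\big)=0$, and since $\mathcal F_{t-1}\otimes\tilde{\mathcal F}_{t-1}\subseteq\mathcal F_{t-1}\otimes\tilde{\mathcal F}_T$ the tower property yields $\E_\pi\big(\hat\beta_t\mid\mathcal F_{t-1}\otimes\tilde{\mathcal F}_{t-1}\big)=0$. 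The term $\hat\gamma_t$ is treated symmetrically: the second marginal condition on $\pi$ together with the analogous form of Proposition~\ref{Prop:CharacProj} for $\tilde\proj_{t-1}$ gives $\tilde\proj_{t-1}(\hat\gamma_t)=\E_\pi\big(\hat\gamma_t\mid\mathcal F_T\otimes\tilde{\mathcal F}_{t-1}\big)=0$, and again the tower property gives $\E_\pi\big(\hat\gamma_t\mid\mathcal F_{t-1}\otimes\tilde{\mathcal F}_{t-1}\big)=0$. Summing the two identities proves the displayed equation, hence the martingale property.

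The only place where genuine care is needed — and where I expect to do the real work — is in invoking Proposition~\ref{Prop:CharacProj} beyond elementary tensors: $\proj_t$ and $\tilde\proj_t$ are defined on products $\hat\beta(\xi)\,\hat\gamma(\tilde\xi)$ and extended by linearity, so one must first write $\hat\beta_t$ as a finite linear combination of products $a(\xi)\,b(\tilde\xi)$ with $b$ being $\tilde{\mathcal F}_{t-1}$-measurable — always possible on a finite tree — and then invoke linearity of both the projection and the conditional expectation, exactly as in the proof of \citep[Proposition~2.48]{PflugPichlerBuch}. I would also point out that optimality of $\pi$ is not actually used in the argument: the displayed identity, and hence the martingale property, holds for \emph{every} $\pi$ feasible for~\eqref{eq:SinkhornFull}. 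Optimality only serves to pair the particular process $M_t$ at which the supremum in Theorem~\ref{ND_MSink} is attained with the particular measure $\pi$ through $\pi=e^{-\lambda(d^r-M_T)-1}$; taking unconditional expectations in the displayed identity then also confirms $\E_\pi M_T=M_0$, as it should.
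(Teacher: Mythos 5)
Your proof is correct: reducing the martingale property to $\E_\pi[\hat\beta_t+\hat\gamma_t\mid\mathcal F_{t-1}\otimes\tilde{\mathcal F}_{t-1}]=0$ and deriving this from $\proj_{t-1}(\hat\beta_t)=0$, $\tilde\proj_{t-1}(\hat\gamma_t)=0$ via Proposition~\ref{Prop:CharacProj} and the tower property is exactly the argument of \citep[Theorem~2.49]{PflugPichlerBuch}, to which the paper delegates the proof without spelling it out. Your two side remarks (the extension of $\proj_t$ beyond elementary tensors on a finite tree, and the fact that only feasibility of $\pi$ is used) are accurate and do not change the route.
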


\begin{proof}
	The proof of \citep[Theorem~2.49]{PflugPichlerBuch} applies with minor adaptions only.
\end{proof}

\section{Numerical results\label{sec:Experiments}}
\todo[inline, size= normalsize]{Wasserstein: $d^r$, nested: $\boldsymbol d_W^r$, Sinkhorn: $d_S^r$, nested Sinkhorn: $\boldsymbol d_S^r$ (e: mit entropie)}

\begin{figure}[ht]
	\centering \subfloat[Nested distance (blue) and Sinkhorn divergence (green, red) for regularization
	parameter $\lambda\in\{0.5,1,2,\dots,30\}$\label{fig:ComputationResultHeight3a}]{\includegraphics[trim=0 0 0 10, clip, width=0.45\textwidth]{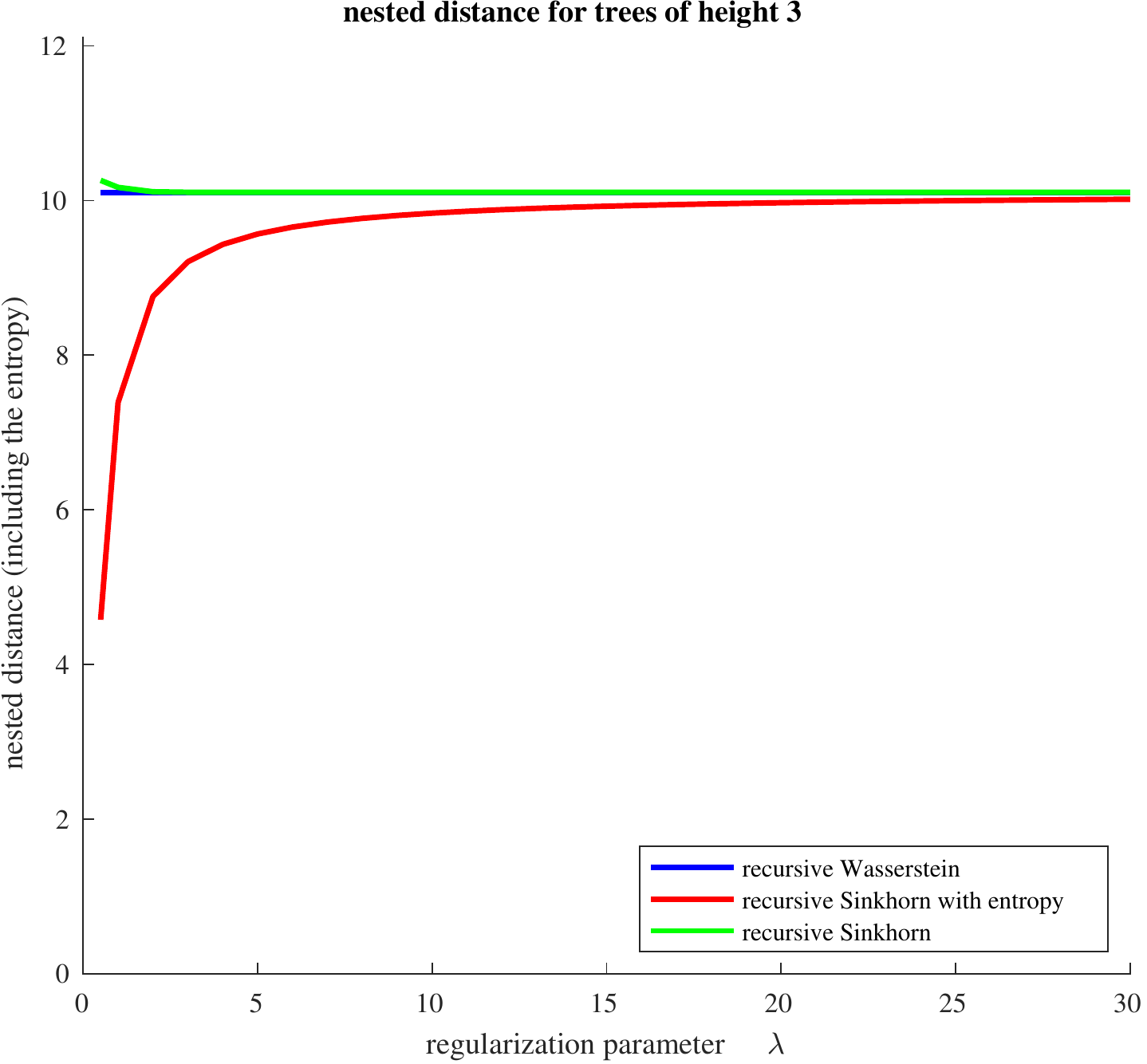} }
	\qquad\subfloat[computation time required for Algorithm~\ref{alg:ndTreeRecursive}
	(blue) and Algorithm~\ref{alg:SinkhornIteration} (green, red)\label{fig:ComputationResultHeight3b}]{\includegraphics[trim=0 0 0 10, clip, width=0.45\textwidth]{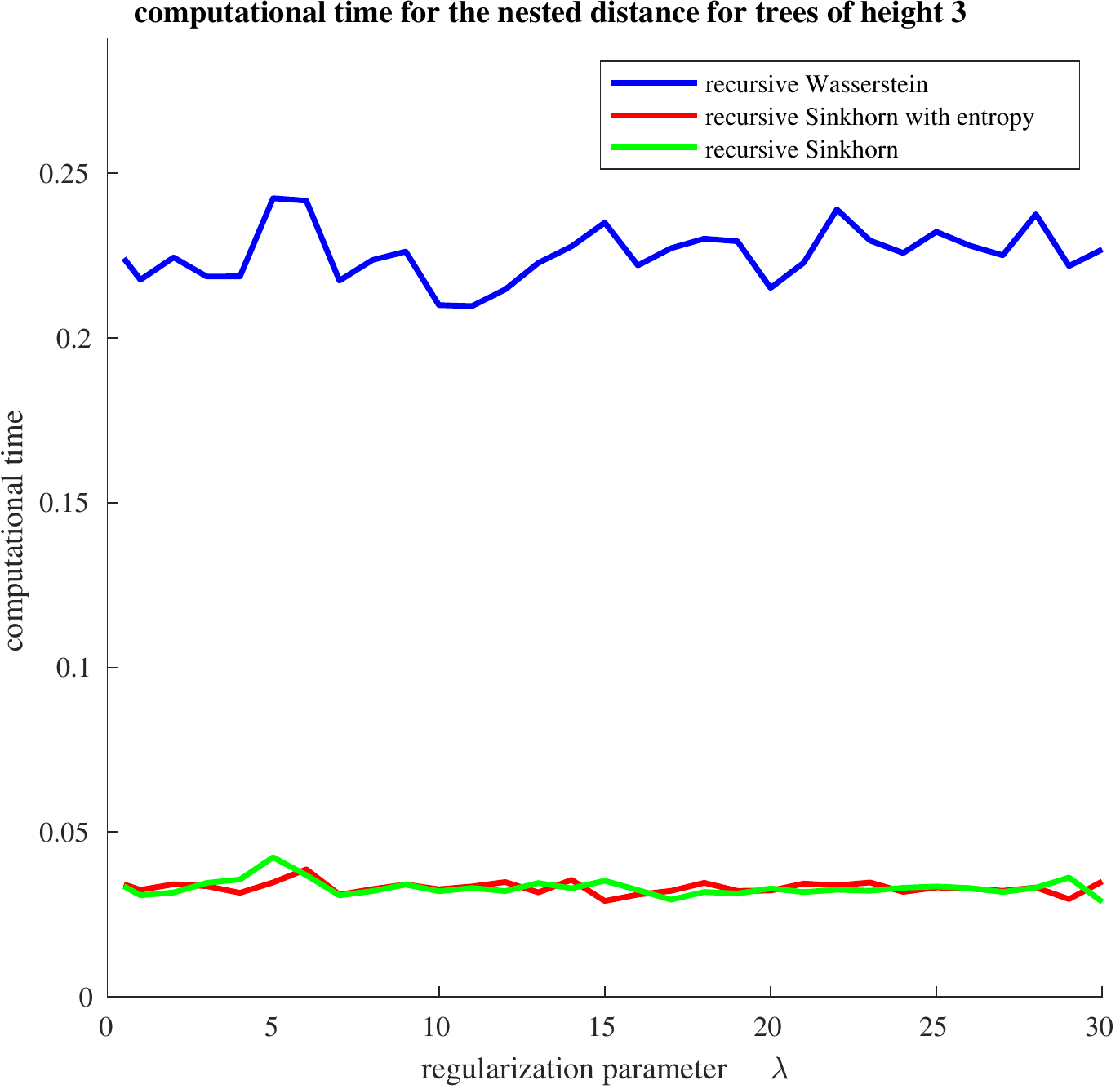} 

	}\caption{Results from computation of an arbitrary chosen processes given in
	Figure~\ref{fig:TreeComputation} with $r=1$ and $d(\xi_i,\tilde\xi_j)=|\xi_i-\tilde\xi_j|$.\label{fig:ComputationResultHeight3}}
\end{figure}
The nested Sinkhorn divergence $\boldsymbol d_S^r$ as well as $\boldsymbol {de}_S^r$ depend on the regularization parameter~$\lambda$. We discuss this dependency, the error, speed of convergence and numerical issues in comparison to the non-regularized nested distance $\nd_W^r$.

We compare Algorithm~\ref{alg:ndTreeRecursive} and  Algorithm~\ref{alg:SinkhornIteration} with respect to the nested distance $\nd_W^r$ and the nested Sinkhorn divergence with and without the entropy $\frac1\lambda  H(\pi^S)$ as well as the required computational time for two finite valued stochastic scenario processes visualized in Figure~\ref{fig:TreeComputation}.

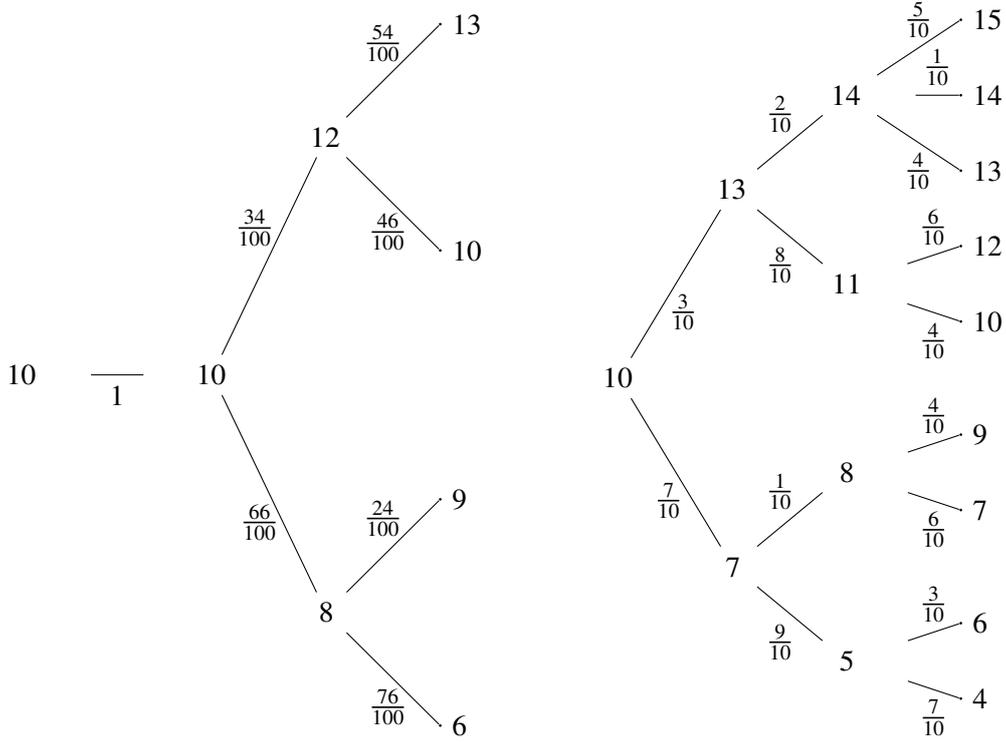
\begin{figure}[ht]
\begin{minipage}[b]{0.5\linewidth}%
\centering
\pagestyle{empty} 
\tikzstyle{level 1}=[level distance=2.5cm, sibling distance=1cm] 
\tikzstyle{level 2}=[level distance=1.5cm, sibling distance=6.3cm] 
\tikzstyle{level 3}=[level distance=1.5cm, sibling distance=3cm]
\tikzstyle{bag} = [text width=4em, text centered]
\tikzstyle{end} = [circle, minimum width=0pt,fill, inner sep=0pt] 

\begin{tikzpicture}[grow=right]
\node[bag] {10}     child {         node[bag] {10}                     child {         node[bag] {8}                     child {                 node[end, label=right:                     {6}] {}                 edge from parent                 node[below]  {$\frac{76}{100}\,\,$}             }             child {                 node[end, label=right:                     {9}] {}                 edge from parent                 node[above]  {$\frac{24}{100}\,\,\,\,$}             }             edge from parent              node[below]  {$\frac{66}{100}\,\,\,\,$}             }             child {         node[bag] {12}                     child {                 node[end, label=right:                     {10}] {}                 edge from parent                 node[below]  {$\frac{46}{100}\,\,$}             }             child {                 node[end, label=right:                     {13}] {}                 edge from parent                 node[above]  {$\frac{54}{100}\,\,\,\,$}             }             edge from parent              node[above]  {$\frac{34}{100}\,\,\,\,\,\,$}             }             edge from parent                      node[below]  {$1$}     }; 
\end{tikzpicture}%
\end{minipage}%
\begin{minipage}[b]{0.5\linewidth}%
\centering
\pagestyle{empty} 
\tikzstyle{level 1}=[level distance=1.5cm, sibling distance=5cm] 
\tikzstyle{level 2}=[level distance=1.5cm, sibling distance=2.5cm] 
\tikzstyle{level 3}=[level distance=1.5cm, sibling distance=1cm]
\tikzstyle{bag} = [text width=4em, text centered]
\tikzstyle{end} = [circle, minimum width=0pt,fill, inner sep=0pt]

\begin{tikzpicture}[grow=right]
\node[bag] {10}     child {         node[bag] {7}                     child {         node[bag] {5}                     child {                 node[end, label=right:                     {4}] {}                 edge from parent                 node[below]  {$\frac{7}{10}$}             }             child {                 node[end, label=right:                     {6}] {}                 edge from parent                 node[above]  {$\frac{3}{10}$}             }             edge from parent              node[below]  {$\frac{9}{10}\,\,\,\,$}             }             child {         node[bag] {8}                     child {                 node[end, label=right:                     {7}] {}                 edge from parent                 node[below]  {$\frac{6}{10}$}             }             child {                 node[end, label=right:                     {9}] {}                 edge from parent                 node[above]  {$\frac{4}{10}$}             }             edge from parent              node[above]  {$\frac1{10}\,\,\,\,$}             }             edge from parent                      node[below]  {$\frac{7}{10}\,\,\,$}     }     child {         node[bag] {13}                     child {         node[bag] {11}                     child {                 node[end, label=right:                     {10}] {}                 edge from parent                 node[below]  {$\frac{4}{10}$}             }             child {                 node[end, label=right:                     {12}] {}                 edge from parent                 node[above]  {$\frac{6}{10}$}             }             edge from parent              node[below]  {$\frac{8}{10}\,\,\,\,$}             }             child {         node[bag] {14}                     child {                 node[end, label=right:                     {13}] {}                 edge from parent                 node[below]  {$\frac{4}{10}$}             }             child {                 node[end, label=right:                     {14}] {}                 edge from parent                 node[above]  {$\frac1{10}$}             }             child {                 node[end, label=right:                     {15}] {}                 edge from parent                 node[above]  {$\frac{5}{10}$}             }             edge from parent              node[above]  {$\frac2{10}\,\,\,\,$}             }             edge from parent                      node[below]  {$\,\,\,\frac{3}{10}$}     };
\end{tikzpicture}%
\end{minipage}\caption{Two arbitrary chosen processes with height $T=3$.}
\label{fig:TreeComputation}
\end{figure}
 
Figure~\ref{fig:ComputationResultHeight3} displays the results.
We see that the regularized nested distance~$\nd_S^r$ (green)
and~$\boldsymbol {de}_S^r$ (red) converge to the nested distance~$\nd_W^r$ for increasing $\lambda$.
In contrast to $\nd_S^r$, the regularized nested distance including the entropy converges slower to $\nd_W^r$.
The reason is that for larger $\lambda$ the weight of the entropy
in the cost function in~\eqref{eq:Sinkhorn} decreases and the entropy of $\pi^S$ and $\pi^W$ coincide (cf.~\eqref{eq:28}).
Computing the distances with Sinkhorn's algorithm in recursive way, in contrast to solving the linear problem for the Wasserstein distance, is about six times faster. In addition, the required time for the regularized nested distance with and without the entropy varies much less by contrast with the computational time for the nested distance. 
Furthermore, the differences between $\nd_W^r$ and $\nd_S^r$ and~$\nde_S^r$, respectively, is rapidly decreasing and insignificant for $\lambda>20$.
Moreover, the time displayed in Figure~\ref{fig:ComputationResultHeight3b} does not
depend on the regularization parameter~$\lambda$.

We now fix $\lambda=20$ and vary the stages $T\in\{1,2,3,4,5\}$. The first finite tree has the branching structure $[1\ 2\ 3\ 2\ 3\ 4]$ and the second tree has a simpler structure $[1\ 2\ 2\ 1\ 3\ 2]$ (i.e., the first tree has 144 leaf nodes and the second tree 24). All states
and probabilities in the trees are generated randomly.

\begin{table}[ht]
\centering %
\begin{tabular}{cccccccc}
\toprule 
stages & \multicolumn{2}{c}{Wasserstein} & \multicolumn{3}{c}{Sinkhorn} & \multicolumn{1}{c}{difference} & \multicolumn{1}{c}{time}\tabularnewline
\midrule 
T & $\nd_W^r$ & time & $\nd_S^r$ & $\nde_S^r$ & time & $\nd_W^r-\nde_S^r$ & acceleration\tabularnewline
\midrule 
$1$ & 1.8 & 0.06\,s & 1.81 & 1.75 & 0.006\,s & 0.06 & 10$\times$\tabularnewline
$2$ & 5.1 & 0.13\,s & 5.12 & 4.97 & 0.022\,s & 0.14 & 5.8$\times$\tabularnewline
$3$ & 5.8 & 0.50\,s & 5.81 & 5.66 & 0.062\,s & 0.15 & 8.1$\times$\tabularnewline
$4$ & 7.3 & 1.54\,s & 7.32 & 7.08 & 0.368\,s & 0.24 & 4.2$\times$\tabularnewline
$5$ & 10.1 & 10.29\,s & 10.05 & 9.72 & 2.873\,s & 0.35 & 3.6$\times$\tabularnewline
\bottomrule
\end{tabular}\caption{Average distance and divergence with corresponding computational time in seconds on i5-3210M CPU. All states and probabilities are generated randomly. The regularization parameter is $\lambda=20$ and $r=1$.}
\label{tab:WassersteinSinkhornComparison}
\end{table}

Table~\ref{tab:WassersteinSinkhornComparison} summarizes the results collected.
We notice that the Sinkhorn algorithm is up to $10$~times faster compared with the usual Wasserstein distance, although the speed advantage decreases for larger trees.
The Sinkhorn algorithm also leads to small errors which increase marginally for trees with more stages. 

Additionally, we tried to improve the speed by modifying the recursive algorithm. Instead of computing once from $T-1$ down to $0$ we computed from $T-1$ down to $0$ several times to achieve a convergence in the optimal transport plan $\pi^S$. This approach has no advantages.

\section{Summary\label{sec:Summary}}
Nested distance allows distinguishing trees by involving the information encoded in filtrations.
In this paper we regularize the Wasserstein distance and introduce the Sinkhorn divergence to the nested distance. The tower property also applies for the regularization. We show that the nested divergence converges to the nested distance for increasing regularization parameter $\lambda\to\infty$. 

In conclusion, we can summarize that the Sinkhorn divergence offers a good trade-off between the regularization error and the speed advantage. Further work should focus on defining a (nested) distance for neuronal networks and extending the implementation of Sinkhorn divergence in the Julia package for faster tree generation and computation.

\section{Acknowledgement}
We are thankful to Beno\^{i}t Tran for pointing out further references, particularly on convergence, in his thesis \cite{Tran2020} supervised by Marianne Akian and Jean-Philippe Chancelier.

\bibliographystyle{abbrvnat}
\bibliography{../../Literatur/LiteraturAlois}

\end{document}